%
\documentclass[11pt,a4paper,twoside,draft]{amsart}

\usepackage[T1]{fontenc}
\usepackage{amssymb}

\setlength{\topmargin}{0cm}
\setlength{\textheight}{24cm}
\setlength{\oddsidemargin}{1.0cm}
\setlength{\evensidemargin}{1.0cm}
\setlength{\textwidth}{14.0cm}

\newcommand{\beqla}[1] {\begin {eqnarray}\label{#1}}
\def \eeq {\end {eqnarray}}
\newcommand{\beqno}{\begin{eqnarray*}}
\newcommand{\eeqno}{\end{eqnarray*}}

\newtheorem{theorem}{Theorem}
\newtheorem{lemma}{Lemma}

\newtheorem{corollary}{Corollary}
\newtheorem{definition}{Definition}
\theoremstyle{definition}

\overfullrule=0pt

\theoremstyle{remark}
\newtheorem{remark}[theorem]{Remark}



\newcommand{\sst}{\scriptstyle}

\newcommand{\half}{\frac{1}{2}}

\newcommand{\real}{{\mathbf R}}

\newcommand{\nanu}{{\mathbf N}}

\newcommand{\torus}{{\mathbf T}}

\newcommand{\refeq}[1]{(\ref{#1})}

\newcommand{\homog}{{\widetilde W}}

\newcommand{\smooth}{C_0^\infty}
\DeclareMathOperator{\cspan}{\overline{span}}
\DeclareMathOperator{\supp}{{\rm supp}}
\DeclareMathOperator{\expec}{{\mathbf E}}
\DeclareMathOperator{\prob}{{\mathbf P}}

\newcommand{\joille}{\, |\, }

\begin{document}

\title[Local independence of fractional Brownian motion]
{Local independence of fractional Brownian motion}

\author{Ilkka Norros}
\address{VTT Technical Research Centre of Finland, 
P.O.\ Box 1000, 02044 VTT, Finland}
\email{ilkka.norros@vtt.fi}

\author{Eero Saksman}
\address{University of Helsinki, Department of Mathematics and Statistics,
P.O.~Box~68 (Gustaf H\"allstr\"omin katu 2b), FIN-00014 University of Helsinki,
Finland}
\email{eero.saksman@helsinki.fi}

\subjclass[2000]{60G15 (60G18,94A99,60H99)}

\date{\today}


\keywords{fractional Brownial motion, asymptotic, independence, local}

\begin{abstract}
Let $ \sigma_{(t,t')}$ be the sigma-algebra generated by the
differences $X_s-X_{s'}$ with $s,s'\in (t,t')$, where $(X_t)_{-\infty
<t<\infty}$ is the fractional Brownian motion with Hurst index $H\in
(0,1).$ We prove that for any two distinct timepoints $t_1$ and $t_2$
the sigma-algebras $\sigma_{(t_1-\varepsilon,t_1+\varepsilon)}$ and
$\sigma_{(t_2-\varepsilon,t_2+\varepsilon)}$ are asymptotically independent as
$\varepsilon\searrow 0$. We show this in the strong sense that
Shannon's mutual information between the two $\sigma$-algebras tends
to zero as $\varepsilon\searrow 0$. Some generalizations and
quantitative estimates are provided also.
\end{abstract}

\maketitle

\section{Introduction}\label{se:intro}

Let $X=(X_t)_{-\infty <t<\infty}$ be the standard fractional Brownian
motion (FBM) with Hurst index $H\in (0,1)$. Thus, $X_t$ is a centered
Gaussian process with stationary increments and variance function
$\expec X_t^2=|t|^{2H}$ (see, e.g.,
\cite{Kolmogorov,Samorodnitsky}). The parameter value $H=\half$ yields
the standard Brownian motion. FBM is a $H$-self-similar process, that
is, $(X_{at}){\buildrel(d)\over=}(a^HX_t)$, where $\buildrel(d)\over=$
stands for the equality of finite-dimensional distributions.

When $H\not=\half$, the increments of $X$ on disjoint time intervals
are always dependent --- negatively correlated for $H<\half$ and
positively correlated for $H>\half$. Moreover, when $H>\half$, the
sequence $X_1$, $X_2-X_1$, $X_3-X_2,\ldots$ is long-range
dependent. i.e.\ $\sum_{i=1}^\infty\expec X_1(X_{i+1}-X_i)=\infty$
(see \cite{Cox,Samorodnitsky}). FBMs with $H>\half$ are often used in
applications as a mathematical model for far-reaching dependence. 
 
However, as we show in this paper, `small and distant' events in FBMs
are nevertheless asymptotically independent. This holds both as
asymptotic orthogonality of the Gaussian subspaces generated by the
processes $(X_t)_{|t|<1}$ and $(X_{n+t}-X_n)_{|t|<1}$ as $n\to\infty$,
and in the stronger sense that the mutual (Shannon) information
$I((X_t)_{|t|<1}:(X_{n+t}-X_n)_{|t|<1})$ is finite and decays to zero
as $n\to\infty$. By self-similarity, this is equivalent to considering
the increment processes around two fixed timepoints,
$(X_{s+u}-X_s)_{|u|<\varepsilon}$ and
$(X_{t+u}-X_t)_{|u|<\varepsilon}$, as $\varepsilon\searrow0$. We
propose to call this latter property {\em local independence}.  

Our
paper was motivated by \cite{Mandjes}, where FBM's local independence
property was needed, but attempts to find this result from literature
were unsuccessful. Very recently, however, J.\ Picard \cite{Picard}
has proven the asymptotic orthogonality result using a different
technique.  The more functional analytic approach of the present note has the advantage of giving
very precise estimates both for the rate of asymptotic orthogonality,
  and for the
much stronger property of  asymptotically vanishing mutual information.

The structure of the paper is as follows: in the first section we
briefly recall certain facts about Sobolev spaces with fractional
smoothness index -- these spaces are the main tool in our approach.  We have
tried to make the exposition readable for the readers with no previous
knowledge on these spaces. The second section reviews the basic facts
on the Gelfand-Yaglom theory of mutual information between Gaussian
spaces. The third section contains the proof of our main results. The
results are obtained in a quantitative form in terms of the relative
size of the time intervals involved.  Finally, the fourth section
briefly considers the higher dimensional case and states open
questions.

\section{Preliminaries I: the fractional Sobolev spaces}\label{se:sobolev}

We shall apply the common notation for uninteresting constants. They
will all be denoted by the letter $c$, and its value can vary inside a
single estimate. The notation $a\sim b$ means that the ratio of the
(positive) quantities $a$ and $b$ stays bounded from below and above
as the parameters of interest vary. The inner product of elements
$\phi$ and $\psi$ of a Hilbert space $\mathcal{H}$ will be denoted as
$(\phi,\psi)_{\mathcal{H}}$, and the angle $\sphericalangle (A,B)$
between subspaces $A$ and $B$ of ${\mathcal{H}}$ is defined by
$$
\cos (\sphericalangle (A,B)):=
\sup\left\{\frac{(U,V)_{\mathcal{H}}}{\|U\|_{\mathcal{H}}
\|V_{\mathcal{H}}\|}:U\in A,V\in B\right\}.
$$

Suitable references for this section are e.g. \cite[Section 6]{Rudin}
or selected parts of \cite{Stein1}.  Vastly more information can be
found in Triebel's monographs, like \cite{Triebel}.  Actually only
very little of the theory of Sobolev spaces is needed, and we try to be
as self-contained as possible.

The Fourier transform of a tempered distribution $f$ on $\real^n$ is
defined as 
$$
\widehat f(\xi ):=(2\pi )^{-n/2}\int_{\real^n }e^{-ix\cdot
\xi}f(x)\, dx.
$$
We shall employ the notation $\langle \lambda, \mu \rangle$
for the distributional pairing, assuming that it is well-defined for
$\lambda$ and $\mu $. Recall that the convolution $\lambda*\phi$ is
always defined if $\lambda$ is a Schwartz distribution and
$\phi\in\smooth (\real^n)$, and its Fourier transform is the product
$(2\pi )^{n/2}\widehat\phi \, \widehat\lambda .$ Moreover, by the
definition of the Fourier transform, the Parseval identity can be
written in the form
$$
\langle \lambda, \overline{\phi}\rangle =
\langle \widehat\lambda, \overline{\widehat\phi}\rangle.
$$

Let $s\in\real$. The Sobolev space $W^{s,2}(\real^n)$ is defined as
the Hilbert space of tempered distributions $f$ on $\real^n$ such that
the Fourier transform $\widehat f(\xi )$ is a locally integrable
function with the property \beqla{eq1.1} \| f\|_{s,2}:= \|
f\|_{W^{s,2}}:=\left( \int_{\real^n} | \widehat f( \xi )|^2(1+|\xi
|^2)^s \right)^{1/2} <\infty .  \eeq Our normalization constant for
the Fourier transform makes sure that $W^{0,2}(\real^n)= L^2(\real^n)$
isometrically.

In the distributional pairing, the
isometric dual of $W^{s,2}(\real^n)$ is $W^{-s,2}(\real^n)$.  Moreover, the norm
increases as $s$ increases, and for integers $k\in\nanu$ we have that
\beqla{eq1.3} \| f\|^2_{k,2}\sim \int_\real
(|f(x)|^2+\sum_{|\alpha|=k}|f^{(\alpha )}(x)|^2)\, dx.  
\eeq 
Obviously all these spaces are translation invariant, and one may verify that
multiplication by an element in $C_0^\infty (\real^n ) $ is
continuous.

We next recall the homogeneous Sobolev spaces $\homog^{s,2}(\real^n
)$.The norm is replaced by 
\beqla{eq1.6} 
\| f\|_{\homog^{s,2}(\real^n
)}:=\left( \int_{\real^n} | \widehat f( \xi )|^2| \xi |^{2s}\,d\xi
\right)^{1/2} <\infty .  
\eeq 
This norm is certainly well-defined at
least for all $f\in \smooth (\real^n)$, although even then it may take
the value $\infty$ if $s<-n/2.$ In defining the Hilbert space
$\homog^{s,2}(\real^n )$ there indeed arises some complications in the
definition, due to the fact that the $| \xi|^{2s}$ can be either 'too
big' or 'too small' near origin. However, for our main result it is
enough to consider the case $n=1$ and $|s| < 1/2$, and then these
difficulties disappear. For these values of the parameters the
homogeneous spaces are simpy defined as the (inverse) Fourier transform of the
weighted space $L^2_\mu (\real )$, where the weight is of the form
$\mu (d\xi)=|\xi |^{2s}.$ By Cauchy-Schwartz any function in this
weighted space is a locally integrable function, and thus defines a
distribution in a natural way.  On the other hand, every Schwartz test
function belongs to this weighted space, which can be used to show
that $\smooth (\real )\subset \homog^{s,2}$ is a dense
subset. Moreover, the isometric duality
$(\homog^{s,2}(\real ))^*=\homog^{-s,2}(\real )$ 
holds via the distributional duality
$$
\langle \phi,\psi\rangle =\int_\real \phi (x)\psi (x)\, dx.
$$
The pairing is originally defined only for test functions, but it 
extends to elements
$\phi \in\homog^{s,2} (\real) $  and $\psi \in\homog^{-s,2} (\real) $ by
continuity and density.

We then fix $s\in (-1/2,1/2)$ together with an open interval $I\subset\real$ ($I$ can well be
unbounded) and define the Sobolev-functions over this interval.  First
of all we denote by $\homog_0^{s,2}(I)$ the closure of $C_0^\infty
(I)$ in the space $\homog^{s,2}(\real ).$ Clearly all the elements in
$\homog_0^{s,2}(I)$ are distributions supported on $\overline{I}.$ We
will also need the space $\homog^{s,2}(I)$ which consists of
restrictions of elements of $\homog^{s,2}(\real)$ on the interval
$I$. Thus $\homog^{s,2}(I)=\{ g_{|I} : g\in \homog^{s,2}(\real )
\}$. This space is naturally normed by the induced quotient norm
$$
\| f\|_{ \homog^{s,2}(I) } := \inf \{ \| g\|_{\homog^{s,2}(\real )} :
g_{|I} =f \}.
$$
In a similar vain one defines the non-homogeneous space $W^{s,2}(I)$ by setting 
$W^{s,2}(I)=\{ g_{|I} : g\in W^{s,2}(\real ) \}$ and introducing the
quotient norm
$$
\| f\|_{ W^{s,2}(I) } := \inf \{ \| g\|_{W^{s,2}(\real )}  : g_{|I} =f \}.
$$
This definition makes sense for all $s\in\real .$
One may easily verify that $ \| f\|^2_{W^{1,2}(I)}\sim
\int_I(f'^2(x)+f^2(x))\, dx , $ where $f'$ is the distributional
derivative of $f.$

Since $\homog_0^{s,2}(I)\subset\homog^{s,2}(\real) $ is a (closed) subspace,
we deduce by standard Hilbert space theory that isometrically
\beqla{eq:1.0}
(\homog_0^{s,2}(I))'=\homog^{-s,2}(I)\quad \mbox{and }
\quad (\homog^{s,2}(I))'=\homog_0^{-s,2}(I)
\eeq
through the pairing $\langle \phi,\psi\rangle =\int_I\phi (x)\psi
(x)\, dx$ (extended again by continuity). There is thus a natural
isometry $G: \homog^{-s,2}(I)\to \homog_0^{s,2}(I)$ in such a way that
\beqla{eq:1.00}
( \phi,G\psi )_{\homog_0^{s,2}(I)}=\int_I \phi (x)\psi (x)\, dx
\eeq
for smooth elements $\phi$ and $\psi$. Again this extends for any
$\phi\in \homog_0^{s,2}(I)$ and $\psi\in \homog^{-s,2}(I)$ by
continuity.

In the Lemma below the assumption $|s|<\half$ is crucial. 

\begin{lemma}\label{le:1.10}
Let $s\in (-\half,\half)$ and let $I\subset\real$ be an open interval
of length 1.

\smallskip

\noindent {\rm (i)}\quad Multiplication by the signum function extends
to a bounded linear operator on $\homog^{s,2}(\real )$.  In other words, $\|
\chi_{(-\infty ,0)}f\|_{\homog^{s,2}(\real )}, \|
\chi_{(0,\infty)}f\|_{\homog^{s,2}(\real )} \leq c\| f\|_{\homog^{s,2}(\real )}$ for
all $f\in\smooth (\real )$. The same statement remains true if  $\homog^{s,2}(\real )$ is
replaced by  $W^{s,2}(\real )$.

\smallskip

\noindent {\rm (ii)}\quad $\homog_0^{s,2}(I)=\{f\in \homog^{s,2}(\real ): \supp (f)\subset\overline{I}\} .$

\smallskip

\noindent {\rm (iii)}\quad We have
$\homog_0^{s,2}(I)=\homog^{s,2}(I)=W^{s,2}(I)$ with equivalent norms
(the constant of isomorphism does not depend on the location of the
interval $I$).

\smallskip

\noindent {\rm (iv)}\quad There is a continuous inclusion
$W^{1,2}(I)\subset \homog_0^{s,2}(I)$, and this natural imbedding is a
Hilbert-Schmidt operator.
\end{lemma}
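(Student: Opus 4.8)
The plan is to treat (i) as the analytic core, to derive (ii) and (iii) from it by soft functional-analytic arguments, and to prove (iv) by a separate spectral estimate. In every part the restriction $|s|<\half$ is decisive, entering in (i)--(iii) through a Hardy-type inequality and in (iv) as a summability threshold.

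For (i) I would work on the Fourier side, where multiplication by $\sgn$ becomes a constant multiple of the Hilbert transform of $\widehat f$; thus the assertion is equivalent to the boundedness of the Hilbert transform on the weighted space $L^2(|\xi|^{2s}\,d\xi)$, which holds because $|\xi|^{2s}$ is a Muckenhoupt $A_2$ weight exactly for $|s|<\half$. To keep the argument self-contained, however, I would prove the estimate directly for $0<s<\half$ using the Gagliardo representation $\norm{f}_{\homog^{s,2}(\real)}^2\sim\iint_{\real^2}\frac{|f(x)-f(y)|^2}{|x-y|^{1+2s}}\,dx\,dy$. Writing $g=\sgn\cdot f$ and splitting the double integral according to whether $x,y$ have equal or opposite signs, the equal-sign part is bounded by $\norm{f}^2$ directly, while the opposite-sign part, after the elementary bound $|f(x)+f(y)|^2\le 2|f(x)-f(y)|^2+c|f(y)|^2$, reduces to the fractional Hardy inequality $\int_\real|y|^{-2s}|f(y)|^2\,dy\le c\,\norm{f}_{\homog^{s,2}(\real)}^2$, valid precisely for $0<s<\half$. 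The case $s=0$ is trivial and the range $-\half<s<0$ follows by duality, since multiplication by $\sgn$ is self-adjoint for the pairing $\int\phi\psi$ under which $\homog^{s,2}(\real)$ and $\homog^{-s,2}(\real)$ are dual. Since $\chi_{(0,\infty)}=\half(1+\sgn)$ and $\chi_{(-\infty,0)}=\half(1-\sgn)$, the first claim follows; the $W^{s,2}$ version is identical, the additional $L^2$ term in $\norm{\cdot}_{W^{s,2}}$ being invariant under multiplication by $\sgn$.

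For (ii) only the inclusion $\supseteq$ needs proof. Given $f$ supported in $\overline I$, I would contract its support strictly inside $I$ by dilating about the midpoint and then mollify; the dilations are strongly continuous on $L^2(|\xi|^{2s}\,d\xi)$ (uniformly bounded near the identity and convergent on the dense class $\smooth(\real)$), and mollification converges to the identity on $\homog^{s,2}(\real)$ by dominated convergence on the Fourier side, so a diagonal argument presents $f$ as a limit of elements of $\smooth(I)$. For (iii), writing $I=(a,b)$ one has $\chi_I=\chi_{(a,\infty)}-\chi_{(b,\infty)}$, so part (i) and translation invariance show that multiplication by $\chi_I$ is bounded on $\homog^{s,2}(\real)$. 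Given $u\in\homog^{s,2}(I)$ with an extension $g$ satisfying $\norm{g}_{\homog^{s,2}(\real)}\le2\norm{u}_{\homog^{s,2}(I)}$, the function $\chi_Ig$ is supported in $\overline I$, restricts to $u$, and has $\homog^{s,2}(\real)$-norm $\le c\norm{u}_{\homog^{s,2}(I)}$; by (ii) it lies in $\homog_0^{s,2}(I)$. Hence the restriction map $\homog_0^{s,2}(I)\to\homog^{s,2}(I)$, which is norm-decreasing and injective (its kernel would be a distribution supported in the two-point set $\partial I$, of which $\homog^{s,2}(\real)$ contains none for $s>-\half$), is in fact a norm-isomorphism. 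The remaining equality $\homog^{s,2}(I)=W^{s,2}(I)$ reduces, after the same cut-off to compactly supported extensions, to the equivalence of the two norms on $\smooth(I)$: the weights $|\xi|^{2s}$ and $(1+|\xi|^2)^s$ agree up to constants for $|\xi|\ge1$, and the low-frequency discrepancy is absorbed by the fractional Poincar\'e inequality on the bounded interval (the case $s<0$ following by duality). Translation invariance of all the norms gives the stated uniformity in the position of $I$.

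Finally, continuity of $W^{1,2}(I)\hookrightarrow\homog_0^{s,2}(I)$ is immediate from (iii) together with $s<1$. For the Hilbert--Schmidt property I would bound the approximation numbers of $W^{1,2}(I)\to W^{s,2}(I)$: piecewise linear interpolation $P_n$ on a uniform partition into $n$ pieces is a finite-rank operator of rank at most $cn$ with $\norm{f-P_nf}_{L^2(I)}\le cn^{-1}\norm{f}_{W^{1,2}(I)}$ and $\norm{f-P_nf}_{W^{1,2}(I)}\le c\norm{f}_{W^{1,2}(I)}$, so interpolation yields $\norm{f-P_nf}_{W^{s,2}(I)}\le cn^{-(1-s)}\norm{f}_{W^{1,2}(I)}$ and hence $a_n\le cn^{-(1-s)}$. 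As approximation numbers and singular values coincide in Hilbert space, the squared Hilbert--Schmidt norm is $\sum_n a_n^2\le c\sum_n n^{-2(1-s)}$, which converges exactly when $s<\half$. I expect part (i) to be the principal obstacle: it carries the only genuinely harmonic-analytic content and is where the borderline nature of $|s|=\half$ first surfaces (through the Hardy inequality, equivalently the $A_2$ condition); parts (ii)--(iv) then follow by combining this multiplier bound with routine approximation and spectral estimates.
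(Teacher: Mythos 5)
Your proposal is correct, and its overall architecture coincides with the paper's: part (i) carries the analytic content, part (ii) is the same dilation--plus--mollification argument, part (iii) is deduced from (i) and (ii) in both treatments, and (iv) reduces to a spectral summability estimate with threshold $s<\half$. The genuine divergences are three. For (i) the paper simply records that multiplication by $\sgn$ is, up to a constant, the Hilbert transform on the Fourier side and invokes the Muckenhoupt $A_2$ property of $|\xi|^{2s}$ (resp.\ $(1+|\xi|^2)^s$) for $|s|<\half$; your Gagliardo-seminorm/fractional-Hardy argument for $0<s<\half$, closed up by duality for negative $s$, is a legitimate self-contained substitute which locates the borderline $|s|=\half$ at the endpoint of the Hardy inequality rather than of the $A_2$ condition. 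For the norm equivalence in (iii) the paper uses an explicit frequency splitting $f=\phi f_1+\phi f_2$ with $\widehat f_1=\chi_{[-1,1]}\widehat f$ and controls $\norm{f_1}_\infty+\norm{f_1'}_\infty$ by Cauchy--Schwarz, while you compare the weights for $|\xi|\ge1$ and absorb the low frequencies by a Friedrichs/Poincar\'e-type inequality for compactly supported functions, dualizing for $s<0$; both work, but note that your duality step requires the positive-exponent case first and the insertion of a cut-off $\chi_{I'}$ with $I'\supset\overline I$ via part (i), which deserves to be said explicitly. For (iv) the paper transfers to the torus and reads off the singular values $(1+|n|)^{s-1}$ of the diagonal embedding $H^1(\torus)\subset H^s(\torus)$, whereas you bound approximation numbers by piecewise-linear interpolation combined with Sobolev interpolation between $L^2$ and $W^{1,2}$; the paper's computation is exact, yours is more robust (it does not need a convenient orthogonal basis and would survive on less symmetric domains), and both yield square-summability of the singular values exactly for $s<\half$.
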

\begin{proof}
(i)\quad The statement is well-known, see \cite[First Lemma in Section
2.10.2.]{Triebel}. Actually, up to a constant the multiplication by
the signum function corresponds to the action of the Hilbert transfrom
on the Fourier side. Hence the claim follows from the fact that
$|\xi|^{2s}$ is a Muckenhoupt $A^2$-weight on $\real$ for any $s\in
(-1/2,1/2)$, see \cite[Corollary, V.4.2, V.6.6.4]{Stein2}. In a similar way,
by checking that $(1+|\xi|^{2})^{s}$ is a Muckenhoupt weight one obtains
 the statement concerning $W^{s,2}$.

(ii)\quad Let $f\in \homog^{s,2}(\real )$ with $\supp (f)\subset \overline{I}.$
We will show that one may approximate $f$ in norm by the elements of $\smooth (I).$
The dilation $\lambda\to f(\lambda\cdot )$ is a continuous map from a neighbourhood
of $1$ into $\homog^{s,2}(\real )$. Hence, by approximating $f$ with a suitable
dilation we may assume that $\supp (f)$ is contained in $I$. Finally, we
then obtain the required approximant  by a standard  mollification.

(iii)\quad  By the translation invariance of the spaces, the independence on the location
of the interval $I$ is obvious. The first equality is an easy consequence of parts (i) and (ii).
Towards the second equality,
let us first verify that $\smooth (I)$ is dense in $W^{s,2}(I)$. 
By part (i), if $f\in W^{s,2}(I)$ then also $\chi_If\in W^{s,2}(\real ),$
where $\chi_If$ stands for the zero continuation of $f$ to $\real .$
Exactly as in part (ii) we show by dilation and convolution approximation that
$\chi_If$ is in the closure of $\smooth (I)$
in $W^{s,2}(\real )$, which clearly yields the claim.

Hence it remains to show that 
\beqla{eq2:1000}
\|f\|_{W^{s,2}(\real )}\sim\| f\|_{\homog^{s,2}(\real )}\quad \mbox{for}\:\: f\in \smooth (I).
\eeq 
We may clearly assume that $I=(0,1)$. Let us first consider the inequality 
\beqla{eq1.5} 
\|f\|_{W^{s,2}(\real )}\leq c\| f\|_{\homog^{s,2}(\real )}.  
\eeq 
This is immediate if
$s\leq 0$. If $s\in (0,1/2)$ we choose a cut-off function $\phi\in
\smooth(-1,2)$ such that $\phi=1$ on the interval $[-1/2,3/2]$. Let us
decompose
$$
f= \phi f_1+\phi f_2,
$$
where $\widehat f_1= \chi_{[-1,1]}\widehat f,$ and $\widehat f_2=
\widehat f-\widehat f_1.$ Then obviously $\|\phi f_2\|_{W^{s,2}(\real )}\leq
c\| f_2\|_{W^{s,2}(\real )}\leq c\| f\|_{\homog^{s,2}(\real )}$.  Moreover,
$$
f_1(x)=\frac{1}{\sqrt{2\pi}}\int_{-1}^1e^{ix\xi}\widehat f(\xi )\,d\xi,\quad
f'_1(x)=\frac{i}{\sqrt{2\pi}}\int_{-1}^1e^{ix\xi}\xi \widehat f(\xi )\,d\xi\quad ,
$$ 
where, by Cauchy-Schwarz, $\int_{-1}^1|\widehat f(\xi )| d\xi \leq c\|
f\|_{\homog^{s, 2}(\real )}.$ Hence $\| f_1\|_\infty +\| f'_1\|_\infty\leq c\|
f\|_{\homog^{s, 2}(\real )}$ and we obtain that $ \| \phi
f_1\|_{W^{s,2}(\real )}
\leq \| \phi f_1\|_{W^{1,2}(\real )}\leq
c\| f_1\|_{\homog^{s,2}(\real )}.$ By combining these estimates \refeq{eq1.5}
follows.

In turn, the converse inequality 
\beqla{eq1.51} 
\|f\|_{\homog^{s,2}(\real )}\leq c \| f\|_{W^{s,2}(\real )}.  
\eeq 
is immediate
if $s\geq 0$. It clearly follows for negative $s\in (-1/2,0)$ if we
verify that in our situation $\| \widehat f\|_{L^\infty(-1,1)}\leq c
\| f\|_{W^{s,2}(\real )}.$ This is seen by observing that
$$
\widehat f(\xi )=  \frac{1}{\sqrt{2\pi}}\langle f(x ), \phi (x) e^{-i\xi x}\rangle ,
$$
where $\sup_{-1\leq \xi \leq 1}\|\phi (x) e^{-i\xi x}\|_{W^{-s,2}(\real )}\leq c.$ 

(iv)\quad By part (iii), the claim is a consequence of the well-known
Hilbert-Schmidt property of the inclusion $W^{1,2}(I)\subset
W^{s,2}(I)$. Since we have not been able
to find a convenient reference, the simple proof is sketched  here. We may
assume that $I=(-1/2,1/2)$ so that $I\subset (-\pi ,\pi ]=:\torus ,$
where $\torus$ stands for the 1-dimensional torus. By applying a simple
extension one may consider the
spaces in question as closed subspaces of the corresponding Sobolev spaces
$H^1(\torus )$ and $H^s(\torus )$
on the torus,
where  for $f=\sum_{n=-\infty}^\infty a_ne^{inx}$ and $u\in\real$ one sets
$\|f\|_{H^u(\torus )}^2=\sum_{n=-\infty}^\infty  (1+|n|)^{2u}|a_n|^2$
(see e.g.\ \cite{Rudin}). By considering the natural
 orthogonal  basis $((1+|n|)^{-u} e^{inx})_{n=-\infty}^\infty$ we
see that the embedding $H^1(\torus )\subset H^s(\torus )$ is equivalent to
the diagonal operator
with the diagonal elements $((1+|n|)^{s-1} )_{n=-\infty}^\infty .$ This is
Hilbert-Schmidt as
$\sum_{n=-\infty}^\infty  (1+|n|)^{2s-2}<\infty $.

\end{proof}

We shall need the formula for the Fourier transform of the function
$u_\alpha (x):=|x|^{-\alpha},$ where $x\in\real^n$ and $\alpha\in
(0,n).$ It is well-known, see e.g. \cite[V 1. Lemma 2, p.117]{Stein1}, that
\beqla{eq:1.90} 
\widehat u_\alpha (\xi )=d_{n,\alpha}|\xi
|^{\alpha-n},\quad\mbox{where}\;\; d_{n,\alpha}:=
2^{n/2-\alpha}\frac{\Gamma ((n-\alpha )/2)}{\Gamma (\alpha /2)}.  
\eeq

\begin{lemma}\label{le:1.2} Assume that $s\in (-1/2,1/2).$

\noindent {\rm (i)}\quad Let $\alpha >0$, $\alpha\not= 1$ and denote
$f_{\alpha}(x)=(1+|x|)^{-\alpha}.$ Then $f_{\alpha}\in \homog^s(\real
)$ for $\alpha >1/2-s$.

\smallskip

\noindent {\rm (ii)}\quad Let $\alpha >1/2+s,$ $\alpha\not= 1$. Then
for any $k>0$ there is a constant $c (\alpha ,s) >0$ such that
$\|(k-\cdot )^{-\alpha}\|_{\homog^{-s,2}((-\infty ,0))}=c(\alpha ,s)k^{1/2+s-\alpha}.$
In other words, 
\beqla{eq:1.101}
\sup_{\scriptstyle
\begin{array}{l} \sst\|\phi \|_{\homog_0^{s,2}( (-\infty
,0))}\leq 1
\end{array}
}
\int_{-\infty}^0 (k-x)^{-\alpha}\phi (x)\,
dx\;\;=c(\alpha ,s)k^{1/2+s-\alpha}.  
\eeq
\end{lemma}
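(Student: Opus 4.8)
The plan is to reduce everything to the Fourier transform of $f_\alpha(x)=(1+|x|)^{-\alpha}$, proving part (i) by separating the low- and high-frequency behaviour, and then deducing part (ii) from part (i) by a dilation argument combined with the duality \refeq{eq:1.0}.

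For part (i), since $\|f_\alpha\|_{\homog^{s,2}(\real)}^2=\int_\real|\widehat{f_\alpha}(\xi)|^2|\xi|^{2s}\,d\xi$, it suffices to control $\widehat{f_\alpha}$ separately near $\xi=0$ and near $\xi=\infty$. I would split $f_\alpha=\psi f_\alpha+(1-\psi)f_\alpha$ with a cut-off $\psi\in\smooth(\real)$ equal to $1$ near the origin. The first piece is compactly supported, continuous, and smooth except for a jump of its first derivative at $0$; hence $\widehat{\psi f_\alpha}$ is bounded near $0$ and decays like $|\xi|^{-2}$ at infinity. Since $|s|<1/2$, the integrand $|\widehat{\psi f_\alpha}(\xi)|^2|\xi|^{2s}$ is then integrable both at $0$ (as $2s>-1$) and at $\infty$ (as $2s-4<-1$), with no restriction on $\alpha$. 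The second piece is smooth, so its Fourier transform decays rapidly and contributes nothing near infinity; the only issue is its behaviour near $\xi=0$, governed by the algebraic decay $(1-\psi)f_\alpha\sim|x|^{-\alpha}$.

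The crux is therefore the low-frequency estimate, which produces the exponent in the statement and is the main obstacle. For $\alpha>1$ the function $f_\alpha$ is integrable, so $\widehat{f_\alpha}$ is bounded near $0$ and the integrability condition reduces to $2s>-1$, which always holds; here $\alpha>1/2-s$ is automatic. For $0<\alpha<1$ I would show $|\widehat{f_\alpha}(\xi)|\le c|\xi|^{\alpha-1}$ for small $|\xi|$, either by invoking \refeq{eq:1.90} for $u_\alpha(x)=|x|^{-\alpha}$ and estimating the difference $f_\alpha-u_\alpha$, or directly by the substitution $y=x|\xi|$ in $\widehat{f_\alpha}(\xi)=c\int_0^\infty\cos(x\xi)(1+x)^{-\alpha}\,dx=c|\xi|^{\alpha-1}\int_0^\infty\cos y\,(|\xi|+y)^{-\alpha}\,dy$, the last integral converging to a nonzero constant as $\xi\to0$. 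Feeding $|\widehat{f_\alpha}(\xi)|\le c|\xi|^{\alpha-1}$ into the weight yields $|\xi|^{2\alpha-2+2s}$ near $0$, which is integrable exactly when $\alpha>1/2-s$. This is the sharp threshold, and the value $\alpha=1$ is excluded precisely because \refeq{eq:1.90} degenerates there (a logarithm appears).

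For part (ii), first note that on $(-\infty,0)$ one has $(k-x)^{-\alpha}=(k+|x|)^{-\alpha}$, which for $k=1$ is the restriction of $f_\alpha$; by part (i) applied with smoothness index $-s$, this restriction lies in $\homog^{-s,2}((-\infty,0))$ as soon as $\alpha>1/2+s$, so the constant $c(\alpha,s):=\|(1-\cdot)^{-\alpha}\|_{\homog^{-s,2}((-\infty,0))}$ is finite and strictly positive. The exponent $k^{1/2+s-\alpha}$ then comes from a dilation argument. Writing $D_\lambda g(x)=g(x/\lambda)$, a change of variables on the Fourier side gives $\|D_\lambda g\|_{\homog^{-s,2}(\real)}=\lambda^{1/2+s}\|g\|_{\homog^{-s,2}(\real)}$; and since $(-\infty,0)$ is invariant under $D_\lambda$ for $\lambda>0$, the extensions of $D_\lambda f$ are exactly the $D_\lambda$-images of extensions of $f$, so the same scaling passes to the quotient norm on $\homog^{-s,2}((-\infty,0))$. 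Applying this to $(k-x)^{-\alpha}=k^{-\alpha}(1-x/k)^{-\alpha}=k^{-\alpha}D_k\big((1-\cdot)^{-\alpha}\big)$ gives $\|(k-\cdot)^{-\alpha}\|_{\homog^{-s,2}((-\infty,0))}=k^{-\alpha}\cdot k^{1/2+s}c(\alpha,s)=c(\alpha,s)k^{1/2+s-\alpha}$. Finally, the reformulation \refeq{eq:1.101} as a supremum is just the isometric duality $(\homog_0^{s,2}(I))'=\homog^{-s,2}(I)$ from \refeq{eq:1.0}: the displayed supremum is the norm of the functional $\phi\mapsto\int_I(k-x)^{-\alpha}\phi(x)\,dx$ represented by $(k-\cdot)^{-\alpha}$.
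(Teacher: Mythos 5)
Your proposal is correct and follows essentially the same route as the paper: a cut-off decomposition near the origin, with the low-frequency threshold $\alpha>1/2-s$ coming from the $|\xi|^{\alpha-1}$ behaviour of $\widehat{f_\alpha}$ near $0$ (via \refeq{eq:1.90} or your equivalent direct oscillatory-integral computation), and part (ii) deduced from part (i) by the dilation scaling $k^{1/2+s}$ combined with the duality \refeq{eq:1.0}. The only cosmetic difference is that the paper uses a three-term splitting, isolating $(1-\phi)|x|^{-\alpha}$ so that \refeq{eq:1.90} applies exactly, whereas you fold that into a two-term splitting; both work.
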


\begin{proof}
(i)\quad Choose a smooth cut-off function $\phi\in \smooth (\real )$
such that $\phi =1$ in a neighbourhood of the origin. Compose
$$
f_\alpha (x)= \phi (x)f_\alpha (x)+(1-\phi (x))(f_\alpha
(x)-|x|^{-\alpha})+ (1-\phi (x))|x|^{-\alpha} =: g_1(x)+g_2(x)+g_3(x).
$$
Obviously $g_1\in L^1(\real )\cap W^{1,2}(\real )\subset
\homog^{s,2}(\real )$ for all $|s| <1/2.$ An easy eastimate shows that
the same holds for $g_2.$ Moreover, we observe that $(d/dx)g_3\in
L^2(\real )$. Hence $\int_{\real}| \xi |^2|\widehat g_3(\xi )|^2
<1$. Thus the inclusion $g_3\in \homog^{s,2}(\real )$ holds if and
only if the integral $\int_{-1}^1|\xi|^{2s}|\widehat g_3(\xi
)|^2\,d\xi$ is finite.

Consider first the case $\alpha >1.$ Then $g_3\in L^1(\real )$, so
that $\widehat g_3$ is bounded and $g_3\in \homog^{s,2}(\real )$ for
all $|s| <1/2.$ Assume then that $\alpha\in (0,1).$ Then
$g_3(x)-|x|^{-\alpha}\in L^1(\real ),$ so that \refeq{eq:1.90} yields
$|\widehat g_3(\xi )-d_{1,\alpha} |\xi |^{\alpha -1}|\leq C.$ Thus
$\int_{-1}^1|\xi|^{2s}|\widehat g_3(\xi )|^2\,d\xi<\infty $ exactly
for $s>1/2 -\alpha .$

(ii)\quad The definition of the homogeneous Sobolev norm yields the
scaling rule $\| \phi(k\cdot )\|_{\homog^{s,2}(\real ) }= k^{s-1/2}\|
\phi(\cdot )\|_{\homog^{s,2}(\real )}.$ By using this fact, duality, and a substitution
$x=ky$ in the integral we are reduced to showing that
$$
\sup_{\scriptstyle\left\{\begin{array}{l} \sst\phi\in \smooth (-\infty
,0)\\ \sst\|\phi\|_{\homog^s(\real )}\leq
1\end{array}\right.}\int_{-\infty}^0 (1+|x|)^{-\alpha}\phi (x)\, dx
<\infty.
$$
By duality this follows immediately from the fact that
$(1+|x|)^{-\alpha}\in \homog^{-s,2}(\real )$ according to part (i) of
the Lemma.
\end{proof}

We finally remark that all the results stated in this section remain valid with identical proofs
for the Sobolev spaces that contain only real-valued functions.

\section{Preliminaries II: mutual information between Gaussian subspaces}\label{se:information}

In this section we present the needed facts from the Gelfand-Yaglom
theory of mutual information between Gaussian subspaces. In order to
recall the general concept of mutual information, let
$(\Omega,\mathcal{F},P)$ be a probability space, and let $\mathcal{A}$
and $\mathcal{B}$ be sub-$\sigma$-algebras of $\mathcal{F}$. The
mutual (Shannon) information between $\mathcal{A}$ and $\mathcal{B}$
is defined as \cite{Kolmogorov2}
$$
I(\mathcal{A}:\mathcal{B})
:=\sup_{\{ A_j\}\{ B_k\}}\sum_{k,j}\prob (A_j\cap B_k)
  \log\left( \frac{\prob (A_j\cap B_k)}{\prob (A_j)\prob ( B_k)}\right).
$$
Here the supremum is taken over all $\mathcal{A}$-measurable
partitions $\Omega =\bigcup_{j=1}^n A_k$ and $\mathcal{B}$-measurable
partitions $\Omega =\bigcup_{k=1}^m B_k$ of the probability space
($n,m\geq 1$, $\prob (A_j)>0$ and $\prob (B_k) >0$ for all
$j,k$). 

For random variables $X:\Omega\to E$ and $Y:\Omega\to F$, where $E,F$
are measurable spaces, we set $I(X:Y):= I(\sigma (X):\sigma (Y)).$ 
Let $\mu_X$ (resp. $\mu_Y$, $\mu_{(X,Y)}$) be the
distribution (measure) of $X$ (resp. $Y$, $(X,Y)$) in the space $E$
(resp. $F$, $E\times F$). Then, one may check that $I(X:Y)=\infty$ if the measure
$\mu_{(X,Y)}$ is not absolutely continuous with respect to the product
measure $\mu_X\otimes \mu_Y$. Moreover, in the case where
$\mu_{(X,Y)}<<\mu_X\otimes \mu_Y $ we denote $p=
\frac{d\mu_{(X,Y)}}{d(\mu_X\otimes \mu_Y)}$ and have the formula
\beqla{eq-shannon-comp} 
I(X:Y)=\int_{X\times Y}\log(p) \, d(\mu_X\otimes\mu_Y).
\eeq

The Kullback-Leibler information characterizes the shift from
a probability measure $\mu$ to another probability measure $\nu$ on
the same measurable space, and it is defined as
$$
I_{KL}(\mu:\nu)
=\left\{
\begin{array}{ll}
\int\log\frac{d\mu}{d\nu}d\nu,&\quad\mbox{if }\mu<<\nu,\\
\infty,&\quad \mbox{otherwise}.
\end{array}
\right.
$$
Shannon's mutual information can be expressed in terms of the
Kullback-Leibler information as
\beqla{eq-shannon-kullback} 
I(\mathcal{A}:\mathcal{B})=I_{KL}
(P_{(\mathcal{A},\mathcal{B})}:P_{\mathcal{A}}\otimes P_{\mathcal{B}}),
\eeq 
where $P_{(\mathcal{A},\mathcal{B})}$ denotes the unique probability measure on
$(\Omega\times\Omega,\mathcal{A}\times\mathcal{B})$ satisfying
$P_{(\mathcal{A},\mathcal{B})}(A\times B)=P(A\cap B)$ for $A\in\mathcal{A}$,
$B\in\mathcal{B}$. 
Actually, this is obtained from \refeq{eq-shannon-comp}  by letting $X$ (resp. $Y$)
be the identity map $(\Omega ,\mathcal{F})\to (\Omega ,\mathcal{A})$ 
(resp. the identity map $(\Omega ,\mathcal{F})\to (\Omega ,\mathcal{B})$).

The following properties of mutual information are most conveniently
proven through the relation (\ref{eq-shannon-kullback}).

\begin{theorem}\label{shannonbasic}
\noindent{\rm (i)} $I(\mathcal{A}:\mathcal{B})\geq 0$ and equality holds
if and only if $\mathcal{A}$ and $\mathcal{B}$ are independent.

\noindent{\rm (ii)} $I(\mathcal{A}:\mathcal{B})$ is non-decreasing with
respect to $\mathcal{A}$ and $\mathcal{B}$. 

\noindent{\rm (iii)} If $\mathcal{A}_n\uparrow\mathcal{A}$ and
$\mathcal{B}_n\uparrow\mathcal{B}$, then 
$I(\mathcal{A}_n:\mathcal{B}_n)\uparrow I(\mathcal{A}:\mathcal{B})$.

\noindent{\rm (iv)} If $\mathcal{A}_n\downarrow\mathcal{A}$ and
$\mathcal{B}_n\downarrow\mathcal{B}$, and if 
$I(\mathcal{A}_n:\mathcal{B}_n)<\infty$ for some $n$, then 
$I(\mathcal{A}_n:\mathcal{B}_n)\downarrow I(\mathcal{A}:\mathcal{B})$.

\end{theorem}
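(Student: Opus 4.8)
The plan is to reduce all four assertions to standard properties of the Kullback--Leibler functional via the identity \refeq{eq-shannon-kullback}. Throughout write $\mu:=P_{(\mathcal{A},\mathcal{B})}$ and $\nu:=P_{\mathcal{A}}\otimes P_{\mathcal{B}}$ on the product space $(\Omega\times\Omega,\mathcal{A}\times\mathcal{B})$, and let $\varphi(x):=x\log x$ (with $\varphi(0)=0$), a convex function bounded below by $-1/e$. The key bookkeeping remark is that for sub-$\sigma$-algebras $\mathcal{A}'\subset\mathcal{A}$, $\mathcal{B}'\subset\mathcal{B}$ the restriction of $\mu$ (resp.\ $\nu$) to the smaller product $\sigma$-algebra $\mathcal{A}'\times\mathcal{B}'$ is exactly $P_{(\mathcal{A}',\mathcal{B}')}$ (resp.\ $P_{\mathcal{A}'}\otimes P_{\mathcal{B}'}$), since both sides agree on the generating rectangles. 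Hence every statement becomes a statement about how $I_{KL}(\mu|_{\mathcal{G}}:\nu|_{\mathcal{G}})$ behaves as $\mathcal{G}$ ranges over (products of) sub-$\sigma$-algebras, and the whole theorem follows from three facts about relative entropy: non-negativity with its equality case, monotonicity under refinement of $\mathcal{G}$, and its behaviour along monotone sequences of $\sigma$-algebras.

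For (i), if $\mu\not\ll\nu$ then $I_{KL}=\infty>0$ and $\mathcal{A},\mathcal{B}$ cannot be independent (independence forces $\mu=\nu$); if $\mu\ll\nu$ with $p:=d\mu/d\nu$, then $I_{KL}(\mu:\nu)=\int\varphi(p)\,d\nu$ and, since $\int p\,d\nu=1$, Jensen's inequality gives $\int\varphi(p)\,d\nu\geq\varphi(1)=0$. Strict convexity of $\varphi$ forces equality precisely when $p\equiv 1$ $\nu$-a.s., i.e.\ $\mu=\nu$, i.e.\ $P(A\cap B)=P(A)P(B)$ for all $A\in\mathcal{A}$, $B\in\mathcal{B}$. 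For (ii) I would prove the general monotonicity $I_{KL}(\mu|_{\mathcal{G}}:\nu|_{\mathcal{G}})\leq I_{KL}(\mu|_{\mathcal{H}}:\nu|_{\mathcal{H}})$ for $\mathcal{G}\subset\mathcal{H}$: when the right-hand side is finite with density $p$, the density of $\mu|_{\mathcal{G}}$ against $\nu|_{\mathcal{G}}$ is the conditional expectation $\expec_\nu[p\,|\,\mathcal{G}]$, and conditional Jensen yields $\varphi(\expec_\nu[p\,|\,\mathcal{G}])\leq\expec_\nu[\varphi(p)\,|\,\mathcal{G}]$; integrating gives the claim. Applying this with $\mathcal{G}=\mathcal{A}'\times\mathcal{B}'$ and $\mathcal{H}=\mathcal{A}\times\mathcal{B}$ together with the bookkeeping remark proves (ii).

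For (iii) I would first note, by a monotone-class argument, that $\mathcal{G}_n:=\mathcal{A}_n\times\mathcal{B}_n\uparrow\mathcal{A}\times\mathcal{B}$. By (ii) the numbers $I_n:=I(\mathcal{A}_n:\mathcal{B}_n)$ increase and are dominated by $I(\mathcal{A}:\mathcal{B})$, so $L:=\lim_n I_n\leq I(\mathcal{A}:\mathcal{B})$; it remains to prove $L\geq I(\mathcal{A}:\mathcal{B})$. If $L=\infty$ this is trivial, so assume $L<\infty$. Then each $\mu|_{\mathcal{G}_n}\ll\nu|_{\mathcal{G}_n}$, the densities $p_n$ form a non-negative $\nu$-martingale for the filtration $(\mathcal{G}_n)$, and $\sup_n\int\varphi(p_n)\,d\nu=L<\infty$. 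Superlinearity of $\varphi$ (de la Vall\'ee--Poussin) makes $\{p_n\}$ uniformly integrable, so by martingale convergence $p_n\to p$ a.s.\ and in $L^1(\nu)$, where $p=d\mu/d\nu$ on $\mathcal{A}\times\mathcal{B}$ (in particular $\mu\ll\nu$ there, as one checks by passing to the limit in $\int_G p_n\,d\nu=\mu(G)$). Finally Fatou's lemma applied to $\varphi(p_n)+1/e\geq 0$ gives $I(\mathcal{A}:\mathcal{B})=\int\varphi(p)\,d\nu\leq\liminf_n\int\varphi(p_n)\,d\nu=L$, completing (iii).

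Part (iv) is the delicate one. By (ii) the $I_n$ now decrease, and the hypothesis lets me assume $I_n<\infty$ for all $n\geq n_0$, so the densities $p_n=d(\mu|_{\mathcal{G}_n})/d(\nu|_{\mathcal{G}_n})$ are defined and form a \emph{reverse} $\nu$-martingale along the decreasing filtration $\mathcal{G}_n=\mathcal{A}_n\times\mathcal{B}_n$. Reverse martingales are automatically uniformly integrable and converge a.s.\ and in $L^1(\nu)$ to $p_\infty=\expec_\nu[p_{n_0}\,|\,\bigcap_n\mathcal{G}_n]$; moreover conditional Jensen gives $\varphi(p_n)\leq\expec_\nu[\varphi(p_{n_0})\,|\,\mathcal{G}_n]$, so $\{\varphi(p_n)^+\}$ is dominated by a reverse-martingale (hence UI) family, and together with the uniform lower bound $\varphi\geq -1/e$ this yields $I_n=\int\varphi(p_n)\,d\nu\to\int\varphi(p_\infty)\,d\nu$. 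The step I expect to be the main obstacle is identifying this limit with $I(\mathcal{A}:\mathcal{B})$: what the reverse martingale naturally produces is the relative entropy on $\bigcap_n(\mathcal{A}_n\times\mathcal{B}_n)$, and for a decreasing sequence of \emph{product} $\sigma$-algebras this intersection need not coincide with $(\bigcap_n\mathcal{A}_n)\times(\bigcap_n\mathcal{B}_n)=\mathcal{A}\times\mathcal{B}$. Thus the crux is to show that $p_\infty$ is, up to $\nu$-null sets, measurable with respect to $\mathcal{A}\times\mathcal{B}$, so that $\expec_\nu[p_{n_0}\,|\,\bigcap_n\mathcal{G}_n]=\expec_\nu[p_{n_0}\,|\,\mathcal{A}\times\mathcal{B}]$ and hence $\lim_n I_n=I_{KL}(\mu|_{\mathcal{A}\times\mathcal{B}}:\nu|_{\mathcal{A}\times\mathcal{B}})=I(\mathcal{A}:\mathcal{B})$; this is where some regularity of the underlying spaces, or a direct approximation argument based on the partition definition of $I(\mathcal{A}:\mathcal{B})$, has to be invoked.
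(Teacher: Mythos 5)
The paper gives no proof of this theorem beyond the one-line remark that the properties ``are most conveniently proven through the relation \refeq{eq-shannon-kullback}''; your proposal fills in precisely that route, and parts (i)--(iii) are correct as written. Two minor comments there: you tacitly work with the convention $I_{KL}(\mu:\nu)=\int\varphi(d\mu/d\nu)\,d\nu$, $\varphi(x)=x\log x$, i.e.\ you integrate $\log(d\mu/d\nu)$ against $\mu$ rather than against $\nu$ as the paper's display literally says --- that is the right convention (the paper's formula is a typo, since integrating against $\nu$ would make $I_{KL}\le 0$ by Jensen). Also, (ii) follows even more cheaply from the partition--supremum definition, since every $\mathcal{A}'$-measurable partition is $\mathcal{A}$-measurable; but the conditional-Jensen version is fine and you need it anyway for (iii) and (iv).

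Part (iv) contains a genuine gap, which you honestly flag but do not close: your reverse-martingale argument yields $I_n\downarrow I_{KL}\bigl(\mu|_{\mathcal{G}_\infty}:\nu|_{\mathcal{G}_\infty}\bigr)$ with $\mathcal{G}_\infty=\bigcap_n(\mathcal{A}_n\times\mathcal{B}_n)$, and you stop at the (correct) observation that $\mathcal{G}_\infty$ need not coincide with $\mathcal{A}\times\mathcal{B}$, appealing vaguely to regularity of the underlying spaces. No topological regularity is needed; the missing idea is that $\nu=P_{\mathcal{A}_{n_0}}\otimes P_{\mathcal{B}_{n_0}}$ is a \emph{product} measure, and under a product measure the intersection does commute with the product operation modulo null sets. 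Concretely, writing $\mathcal{F}_1=\mathcal{A}_{n_0}\times\{\emptyset,\Omega\}$ for the first coordinate $\sigma$-algebra, one checks on rectangles (via the ordinary reverse martingale theorem in the second coordinate, plus density in $L^1(\nu)$) that $\expec_\nu[h\,|\,\mathcal{A}_{n_0}\times\mathcal{B}_n]\to\expec_\nu[h\,|\,\mathcal{A}_{n_0}\times\mathcal{B}]$ for every $h\in L^1(\nu)$; applied to $h=1_F$ with $F\in\mathcal{G}_\infty$ this gives $\mathcal{G}_\infty\subset\mathcal{A}_{n_0}\times\mathcal{B}$ mod $\nu$, symmetrically $\mathcal{G}_\infty\subset\mathcal{A}\times\mathcal{B}_{n_0}$ mod $\nu$, and since conditional expectations onto the two factors commute for a product measure, $(\mathcal{A}_{n_0}\times\mathcal{B})\cap(\mathcal{A}\times\mathcal{B}_{n_0})=\mathcal{A}\times\mathcal{B}$ mod $\nu$. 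Hence $\mathcal{G}_\infty=\mathcal{A}\times\mathcal{B}$ up to $\nu$-null sets, so $p_\infty=\expec_\nu[p_{n_0}\,|\,\mathcal{A}\times\mathcal{B}]$ a.s.\ and $\lim_nI_n=I(\mathcal{A}:\mathcal{B})$. (Note that absolute continuity $\mu\ll\nu$ on $\mathcal{G}_{n_0}$, guaranteed by the finiteness hypothesis, is what makes ``mod $\nu$-null'' sufficient here.) With this lemma inserted your proof of (iv), and hence of the whole theorem, is complete.
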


When $X$ and $Y$ are finite-dimensional random vectors such that
$(X,Y)$ is a non-degenerate and centered multivariate Gaussian, one may
easily compute by using (\ref{eq-shannon-comp}) that
$$
I(X :Y)
=\frac12\log\frac{\det(\Gamma_X)\det(\Gamma_Y)}{\det(\Gamma_{(X,Y)})},
$$
where $\Gamma_Z$ denotes the covariance matrix of a Gaussian vector
$Z$. In particular, the information between random variables $X$ and
$Y$ with bivariate centered Gaussian distribution is
\begin{equation}
\label{bivariatenormalinfo}
I(\sigma(X):\sigma(Y))=-\log\sin\sphericalangle(X,Y).
\end{equation}

The theory of Shannon information between Gaussian processes was
developed by Gel'fand and Yaglom \cite{Gelfand}. Their fundamental
discovery was that one may express the information between two closed
subspaces $A$ and $B$ of a Gaussian space $\mathcal{G}$ in terms of
the spectral properties of the operator $T:= P_AP_BP_A$, where $P_A$
and $P_B$ stand for the orthogonal projections on $A$ and $B$,
respectively.  In order to explain their result, and for later
purposes, we first recall some basic notions of operator theory.

Let $S:E\to F$ be a bounded linear operator between the separable Hilbert
spaces $E$ and $F$. Let $\{e_i\}_{i\in I}$ be an orthonormal basis for $E$.
The Hilbert-Schmidt norm of $S$ is defined as
$$
\| S\|_{HS(E,F)}:=\big(\sum_{i\in I}\|Se_i\|^2_F\big)^{1/2}.
$$ 
This definition does not depend on the particular orthonormal basis
used.  In case $\| S\|_{HS}<\infty$ we say that $S$ is a
Hilbert-Schmidt operator.  Also it is clear that if $E$ (resp. $F$) is
a Hilbert subspace of a larger space $\widetilde E$ (resp. $\widetilde
F$), then $\|SP_E\|_{HS(\widetilde E,\widetilde F)}=\|S\|_{HS(
E,F)}$. In this sense it is not important to keep exact track on the
domain of definition and image spaces, and one usually abbreviates
$\|S\|_{HS(E,F)}=\|S\|_{HS}.$ For products of bounded linear operators
between (perhaps different) Hilbert spaces we have
\beqla{eq:4.80}
\| TS\|_{HS} \leq\| T\|_{HS} \| S\| \quad\mbox{and}\quad
\| ST\|_{HS} \leq\| S\|\| T\|_{HS}  .
\eeq

Let us then assume, in addition, that $S:E\to E$ is self-adjoint and
positive semi-definite, $S^*=S$ and $S\geq 0$. Then one may always define the
trace of $S$ by setting
$$
{\rm tr\,}(S):=\sum_{i\in I}( e_i,Se_i )
$$
Thus, ${\rm tr\,}(S)\in [0,\infty ]$. In the case that ${\rm tr\,}(S)
<\infty $ we say that $S$ is of trace class. Every trace class
operator $S$ is compact, and since we also assume $S\geq 0$, it has a decreasing sequence of positive eigenvalues
$\lambda_1\geq \lambda_2\geq\ldots \geq 0$, where each eigenvalue is
counted according to its multiplicity. It follows that
\beqla{eq:4.81}
{\rm tr\,}(S)=\sum_{\lambda_k>0}\lambda_k.
\eeq
We finally observe that if $S:E\to F$ is any bounded linear operator,
then $S^*S\geq 0$ is self-adjoint, and we may compute 
\beqla{eq:4.83}
{\rm tr\,}(S^*S)=\| S\|^2_{HS}.
\eeq

Let us then go back to the situation where $A,B$ are closed subspaces
of a Gaussian Hilbert space $\mathcal{G}$ and state the result of
Gelfand and Yaglom. Again $P_A$ and $P_B$ stand for the orthogonal
projections to the subspaces $A$ and $B$, respectively, and
$I(A:B):=I(\sigma\{X: X\in A\}: \sigma\{Y:Y\in B\}).$

\begin{theorem}\label{th:4.10}{\rm \cite{Gelfand}} 
Denote $T:=P_AP_BP_A$. The mutual information $I(A:B)$ is finite if and only
if $\| T\| <1$ {\rm (}i.e.\ $\sphericalangle (A,B)>0${\rm )} and the
operator $T$ is of trace class. Moreover, in this case 
\beqla{eq:4.84}
I(A:B)=\frac{1}{2}\sum_{k:\lambda_k >0}\log (\frac{1}{1-\lambda_k}),
\eeq 
where $\lambda_1\geq \lambda_2\geq \ldots$ are the eigenvalues of
$T$ in the decreasing order repeated according to their multiplicities.
\end{theorem}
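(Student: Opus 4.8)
The plan is to reduce everything to finite dimensions, where the mutual information of a jointly Gaussian pair is given by the determinant formula recorded just before the theorem, and then to pass to the limit using the monotone convergence property of Theorem~\ref{shannonbasic}(iii). Before doing so I would record the elementary operator facts. Since $P_B^2=P_B$ and $P_A^*=P_A$, one has $T=P_AP_BP_A=(P_BP_A)^*(P_BP_A)\ge 0$, so $T$ is self-adjoint and positive with $0\le T\le I$ and $\|T\|=\|P_BP_A\|^2=\cos^2\sphericalangle(A,B)$; thus $\|T\|<1$ is exactly $\sphericalangle(A,B)>0$. Moreover, by \refeq{eq:4.83} applied to $S=P_BP_A$, we have $\operatorname{tr}(T)=\operatorname{tr}\big((P_BP_A)^*(P_BP_A)\big)=\|P_BP_A\|_{HS}^2$, which is the bridge between the trace-class hypothesis and the Hilbert--Schmidt estimates below.

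For the finite-dimensional heart of the argument, fix finite-dimensional subspaces $A'\subset A$ and $B'\subset B$ and apply the singular value decomposition to $P_{B'}|_{A'}\colon A'\to B'$. This produces orthonormal bases $\{\phi_i\}$ of $A'$ and $\{\psi_j\}$ of $B'$ with $(\phi_i,\psi_j)_{\mathcal{G}}=\sqrt{\mu_i}\,\delta_{ij}$, where $\mu_1\ge\mu_2\ge\cdots$ are the eigenvalues of $T':=P_{A'}P_{B'}P_{A'}|_{A'}$. Regarding the $\phi_i$ and $\psi_j$ as jointly Gaussian variables (their covariances are the inner products in $\mathcal{G}$), the matrices $\Gamma_X,\Gamma_Y$ are identities, while $\Gamma_{(X,Y)}$, after pairing $\phi_i$ with $\psi_i$, is block-diagonal with $2\times2$ blocks of determinant $1-\mu_i$. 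The determinant formula then yields at once
\[
I(A':B')=\tfrac12\sum_i\log\frac{1}{1-\mu_i},
\]
each paired block reproducing the bivariate identity \refeq{bivariatenormalinfo} with $\cos\sphericalangle=\sqrt{\mu_i}$.

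Now the limit. Assume first $\|T\|<1$ and choose nested finite-dimensional $A_n\uparrow A$, $B_n\uparrow B$. Writing $c:=\|T\|$ (so $\|T_n\|\le c$ for $T_n:=P_{A_n}P_{B_n}P_{A_n}$) and using $\lambda\le-\log(1-\lambda)\le\lambda/(1-c)$ for $0\le\lambda\le c$ together with the finite-dimensional formula, I obtain the sandwich $\tfrac12\operatorname{tr}(T_n)\le I(A_n:B_n)\le\tfrac{1}{2(1-c)}\operatorname{tr}(T_n)$, with $\operatorname{tr}(T_n)=\|P_{B_n}P_{A_n}\|_{HS}^2$. Choosing an orthonormal basis adapted to the $A_n$ shows $\operatorname{tr}(T_n)\uparrow\|P_BP_A\|_{HS}^2=\operatorname{tr}(T)$ by monotone convergence, while $I(A_n:B_n)\uparrow I(A:B)$ by Theorem~\ref{shannonbasic}(iii); hence $\tfrac12\operatorname{tr}(T)\le I(A:B)\le\tfrac{1}{2(1-c)}\operatorname{tr}(T)$, so $I(A:B)<\infty$ iff $T$ is of trace class. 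For the exact value \refeq{eq:4.84}, when $T$ is trace class (hence compact) I would instead take $A_n=\operatorname{span}(\phi_1,\dots,\phi_n)$ with $\phi_k$ the eigenvectors of $T$ and $B_n=\operatorname{span}(\psi_1,\dots,\psi_n)$ with $\psi_k:=P_B\phi_k/\sqrt{\lambda_k}$; a short computation gives $(\phi_k,\psi_l)_{\mathcal{G}}=\sqrt{\lambda_k}\,\delta_{kl}$, so $T_n$ has exactly the eigenvalues $\lambda_1,\dots,\lambda_n$ and the finite-dimensional formula gives \refeq{eq:4.84} in the limit. Finally, if $\|T\|=1$, i.e. $\sphericalangle(A,B)=0$, take unit vectors $u_n\in A$, $v_n\in B$ with $(u_n,v_n)\to1$; monotonicity (Theorem~\ref{shannonbasic}(ii)) and \refeq{bivariatenormalinfo} give $I(A:B)\ge-\log\sin\sphericalangle(u_n,v_n)\to\infty$.

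The step I expect to be most delicate is the passage to the limit: justifying $\operatorname{tr}(T_n)\to\operatorname{tr}(T)$ and, in particular, treating the case where $T$ fails to be compact. There the eigenvector construction is unavailable, but the sandwich inequality still forces $I(A:B)=\infty$, so the two sources of infiniteness (non-trace-class $T$ and vanishing angle) are correctly matched. Conceptually, the real engine of the proof is the finite-dimensional reduction via canonical correlations (the singular value decomposition), which diagonalizes the pair of subspaces and turns the problem into a product of independent scalar Gaussian channels.
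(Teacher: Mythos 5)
Your argument is correct and follows essentially the same route as the paper's own (sketched) derivation, which quotes the result from Gelfand--Yaglom and, following Dym--McKean, diagonalizes $T$ into canonical pairs $(Z_i,P_BZ_i)$ and sums the bivariate informations \refeq{bivariatenormalinfo}; your sandwich step in addition re-proves Corollary~\ref{co:4.1}. The one point to patch is the final limit: the eigenvectors $\phi_k$ need not span $A$ (nor the $\psi_k$ span $B$) when $A\cap B^{\perp}\neq\{0\}$, so you should enlarge $A_n$ and $B_n$ by orthonormal vectors from $A\cap B^{\perp}$ and $B\cap A^{\perp}$, which only contribute zero eigenvalues to $T_n$ and hence nothing to the information, before invoking Theorem~\ref{shannonbasic}(iii).
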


A nice sketch of the derivation of the formula (\ref{eq:4.84}) is
included in a form of exercises in \cite[pp. 68--69]{McKean}. Assume
that $T$ is of trace class, and let $Z_1,Z_2,\ldots$ be an orthonormal
basis of $T\mathcal{G}$ consisting of eigenvectors corresponding to
the non-zero eigenvalues $\lambda_1\geq \lambda_2\geq \ldots$. It is not
difficult to see that $\{P_BZ_i\}$ is an orthogonal basis of
$P_BP_AP_B\mathcal{G}$, and, moreover, these bases are {\em mutually}
orthogonal: $(Z_i,P_BZ_j)=0$ for $i\not=j$. Since orthogonality
implies independence in the case of Gaussian random variables, it
follows that the information between $\sigma(A)$ and $\sigma(B)$ can
be expressed as the sum of the informations within the pairs
$(Z_i,P_BZ_i)$, given in (\ref{bivariatenormalinfo}):
\begin{eqnarray*}
I(A:B)&=&-\sum_i\log\sin\sphericalangle(Z_i,P_BZ_i)\\
&=&-\frac12\sum_i\log(1-\cos^2\sphericalangle(Z_i,P_BZ_i))
=-\frac12\sum_i\log(1-\lambda_i).
\end{eqnarray*}
Note that since
$\sphericalangle(A,B)=\inf_i\sphericalangle(Z_i,P_BZ_i)$, the
information between subspaces can be infinite even when they have a
positive angle. 

By invoking the Taylor
series of $x\mapsto\log(1/(1-x))$ we obtain for $x\in [0,1)$ that
\beqla{eq:4.85}
x \leq \log (\frac{1}{1-x})=\sum_{k=1}^\infty
 \frac{1}{k}x^k\leq x+\frac{1}{2}x^2(\frac{1}{1-x})\leq 
x(1+\frac{x}{2(1-x)}).
\eeq
Observe also that $T=(P_BP_A)^*(P_BP_A)$ and $\|T\|=\| P_BP_A\|^ 2.$
Moreover, $\lambda_1\leq\|P_BP_A\|\leq\|P_BP_A\|_{HS}.$ By combining
these observations and the facts \refeq{eq:4.81}--\refeq{eq:4.85} we
obtain a formulation suitable for our purposes:

\begin{corollary}\label{co:4.1} 
The angle between the spaces $A$ and $B$ satisfies $\cos
(\sphericalangle (A,B))=\|P_BP_A\|$. We have $I(A:B)<\infty$ if and
only if $\| P_BP_A\| <1$ and $\|P_BP_A\|_{HS}<\infty$. Moreover, in
this case
\beqla{eq:4.86}
\frac{1}{2}\|P_BP_A\|_{HS}^2
&\leq& I(A:B)\;\leq \;\frac{1}{2}\|P_BP_A\|_{HS}^2
\left( 1+ \frac{\|P_BP_A\|}{2(1-\|P_BP_A\|)}\right)
\eeq
\end{corollary}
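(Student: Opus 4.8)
The plan is to obtain all three assertions from the Gel'fand--Yaglom formula \refeq{eq:4.84} and the elementary inequalities \refeq{eq:4.85}, the only genuinely geometric input being the angle identity, which I would establish first and independently. To prove $\cos(\sphericalangle(A,B))=\|P_BP_A\|$, fix a nonzero $U\in A$; since $P_B$ is self-adjoint and fixes every $V\in B$, one has $(U,V)=(P_BU,V)$, so Cauchy--Schwarz yields $\sup_{0\ne V\in B}(U,V)/\|V\|=\|P_BU\|$, attained at $V=P_BU$. Hence the defining supremum equals $\sup_{0\ne U\in A}\|P_BU\|/\|U\|$. Because $P_AU=U$ on $A$ we may write $P_BU=P_BP_AU$ there, and for an arbitrary ambient vector $W=P_AW+(I-P_A)W$ we have $P_BP_AW=P_B(P_AW)$ with $\|W\|\ge\|P_AW\|$; thus the operator norm $\|P_BP_A\|$ is already realized by vectors of $A$, and the identity follows.

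For the remaining claims I would feed the spectral data of $T=P_AP_BP_A$ into Theorem~\ref{th:4.10}. The observations recorded just before the corollary give $\|T\|=\|P_BP_A\|^2$, so $\|T\|<1\iff\|P_BP_A\|<1$, while \refeq{eq:4.83} gives ${\rm tr}(T)=\|P_BP_A\|_{HS}^2$, whence (as $T\ge0$) $T$ is of trace class if and only if $\|P_BP_A\|_{HS}<\infty$; this is exactly the stated finiteness criterion. For the two-sided estimate I apply \refeq{eq:4.85} to each eigenvalue $\lambda_k$ of $T$. The left inequality $\log(1/(1-\lambda_k))\ge\lambda_k$, summed via \refeq{eq:4.81} and ${\rm tr}(T)=\|P_BP_A\|_{HS}^2$, produces the lower bound $\tfrac{1}{2}\|P_BP_A\|_{HS}^2$ in \refeq{eq:4.86}. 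For the upper bound the right inequality in \refeq{eq:4.85} gives $\log(1/(1-\lambda_k))\le\lambda_k(1+\lambda_k/(2(1-\lambda_k)))$; using $\lambda_k\le\lambda_1\le\|P_BP_A\|<1$ and the monotonicity of $x\mapsto x/(1-x)$ I bound each correction factor by the uniform constant $1+\|P_BP_A\|/(2(1-\|P_BP_A\|))$, pull it out of the sum, and again replace $\sum_k\lambda_k$ by $\|P_BP_A\|_{HS}^2$.

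The proof is in essence pure assembly of the facts \refeq{eq:4.81}--\refeq{eq:4.85} together with Theorem~\ref{th:4.10}, so there is no serious obstacle; the one point requiring attention is matching the exact constant in \refeq{eq:4.86}. This works out cleanly only because the uniformization of the per-eigenvalue correction terms uses the sharp recorded bound $\lambda_1\le\|P_BP_A\|$ rather than the equality $\lambda_1=\|P_BP_A\|^2$, so that the apparently weaker $\|P_BP_A\|$ in the denominator is precisely what the termwise estimate delivers. A secondary delicate step is the small reduction in the angle computation showing that $\|P_BP_A\|$ is attained on $A$, which is what links the operator-theoretic quantity $\|P_BP_A\|$ to the geometric definition of $\sphericalangle(A,B)$.
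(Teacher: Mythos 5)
Your proof is correct and follows essentially the same route as the paper, which obtains the corollary by combining Theorem~\ref{th:4.10} with the observations $T=(P_BP_A)^*(P_BP_A)$, $\|T\|=\|P_BP_A\|^2$, $\lambda_1\le\|P_BP_A\|$, and the facts \refeq{eq:4.81}--\refeq{eq:4.85}, exactly as you assemble them. Your additional verification of the angle identity $\cos(\sphericalangle(A,B))=\|P_BP_A\|$ via $(U,V)=(P_BU,V)$ and Cauchy--Schwarz is a correct filling-in of a step the paper leaves implicit.
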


Observe that the above estimate is asymptotically precise in the limit
$\|P_BP_A\|\to 0$, or, equivalently, as $\sphericalangle (A,B)\to
\pi/2$. Especially this is true in the limit $I(A:B)\to 0.$

\section{Statement and proof of the main results}\label{se:orthogonal}

In this section we consider the asymptotic independence of the local
spaces of FBMs. To be more exact, let us first define for any set
$S\subset\real$
$$
E_S:=\cspan\{
X_u-X_v:u,v\in S\},
$$
and the shorthand notation
$$
E_{t,\varepsilon}:=E_{(t-\varepsilon,t+\varepsilon)}.
$$
We consider the following two notions of local independence.

\begin{definition}
We say that the stochastic process $X$ possesses {\em local
independence in the weak sense}, if for any distinct $t_1$, $t_2$
$$
\sphericalangle(E_{t_1,\varepsilon},E_{t_2,\varepsilon})\to\frac{\pi}{2}
\quad\mbox{as }\varepsilon\searrow0.
$$
We say that the stochastic process $X$ possesses {\em local
independence (in the strong sense)}, if for any distinct $t_1$, $t_2$
$$
I(E_{t_1,\varepsilon}:E_{t_2,\varepsilon})\to0
\quad\mbox{as }\varepsilon\searrow0.
$$
\end{definition}

The term `weak' corresponds to its use in `stationarity in the weak
sense'. 

We will consider integrals of the form $\int_\real X_t\phi (t)dt$ for
smooth and compactly supported functions $\phi .$ The definition of
the integral poses no problems since $t\mapsto X_t$ is continuous with
respect to $L^2$-norm of random variables, whence it can be
e.g. defined as the limit of the corresponding Riemann sums (or as a
Bochner integral). Let us start with two simple lemmata.

\begin{lemma}\label{le:2.1}
For any $T\in\real$ and $a>0$ the elements
\beqla{eq2.10}
\int_\real \phi'(t)X_t\, dt,\quad \phi\in \smooth (T,T+a)
\eeq
are dense in $E_{(T,T+a)}.$
\end{lemma}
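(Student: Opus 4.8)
The plan is to prove the lemma in two stages: first that every element \refeq{eq2.10} in fact belongs to $E_{(T,T+a)}$, and then that the closed span of these elements is all of $E_{(T,T+a)}$, the latter via an orthogonality (duality) argument inside the Gaussian Hilbert space $L^2(\Omega)$. The inclusion step rests on the elementary but crucial observation that $\int_\real \phi'(t)\,dt = \phi(T+a)-\phi(T)=0$, since $\phi$ is compactly supported in $(T,T+a)$. Fixing any reference point $t_0\in (T,T+a)$, this lets me rewrite
$$
\int_\real \phi'(t) X_t\, dt = \int_\real \phi'(t)\bigl(X_t - X_{t_0}\bigr)\, dt,
$$
because the subtracted term is $X_{t_0}\int_\real\phi'(t)\,dt = 0$. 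Each Riemann sum approximating the right-hand side is a finite linear combination of increments $X_{s}-X_{t_0}$ with $s,t_0\in (T,T+a)$, hence lies in $E_{(T,T+a)}$; since these sums converge in $L^2(\Omega)$ (the integral being defined exactly this way, as recalled just before the lemma) and $E_{(T,T+a)}$ is closed, the integral itself lies in $E_{(T,T+a)}$.

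For density, let $V\subset E_{(T,T+a)}$ denote the closed span of the elements \refeq{eq2.10}; it suffices to show that the orthogonal complement of $V$ inside $E_{(T,T+a)}$ is trivial. So suppose $Y\in E_{(T,T+a)}$ is orthogonal in $L^2(\Omega)$ to every integral \refeq{eq2.10}, and set $g(t):=\expec(Y X_t)$. This $g$ is a continuous function of $t$, by the $L^2$-continuity of $t\mapsto X_t$ together with Cauchy--Schwarz. Interchanging expectation and integration, the orthogonality condition becomes
$$
\int_\real \phi'(t)\, g(t)\, dt = 0 \qquad \mbox{for all } \phi\in\smooth(T,T+a).
$$
This is precisely the statement that the distributional derivative of $g$ vanishes on $(T,T+a)$, so $g$ is constant there. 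Consequently $\expec\bigl(Y(X_u-X_v)\bigr)=g(u)-g(v)=0$ for all $u,v\in (T,T+a)$, i.e. $Y$ is orthogonal to every increment spanning $E_{(T,T+a)}$. Since $Y$ itself belongs to $E_{(T,T+a)}$, it is orthogonal to its own subspace, forcing $Y=0$ and proving the density.

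The one genuinely delicate point is the interchange of expectation and integration used to pass from $\expec\bigl(Y\int_\real\phi'(t)X_t\,dt\bigr)=0$ to $\int_\real\phi'(t)\,\expec(YX_t)\,dt=0$. This is justified by realizing the integral as the $L^2(\Omega)$-limit of its Riemann sums (equivalently as a Bochner integral) and invoking continuity of the inner product $Z\mapsto\expec(YZ)$; the finite Riemann sums split out of the expectation term by term, and the limit passes through. The remainder is routine, the only conceptual input being the standard distributional characterization that a function whose derivative tests to zero against all of $\smooth(T,T+a)$ is constant on $(T,T+a)$.
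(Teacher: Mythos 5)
Your proof is correct, and the first half (that each $\int_\real\phi'(t)X_t\,dt$ lies in $E_{(T,T+a)}$, via $\int\phi'=0$ and Riemann-sum approximation by increments) is exactly the paper's argument. For the density half, however, you take a genuinely different route. The paper argues constructively: it picks a mollifier $\phi_\varepsilon$ and shows that each generator $X_{t_1}-X_{t_2}$, $t_1,t_2\in(T,T+a)$, is the $L^2$-limit of elements $\int_\real\psi'(t)X_t\,dt$ with $\psi'=\phi_\varepsilon(t_1+\cdot)-\phi_\varepsilon(t_2+\cdot)$ (supported in $(T,T+a)$ for small $\varepsilon$), so the closed span of the family \refeq{eq2.10} contains all increments. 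You instead run a duality argument: any $Y\in E_{(T,T+a)}$ annihilating the family forces $g(t)=\expec(YX_t)$ to have vanishing distributional derivative on $(T,T+a)$, hence to be constant there, hence $Y$ is orthogonal to every increment and so to itself. Both arguments are sound; the interchange of $\expec$ with the integral that you flag as the delicate point is indeed justified by the Bochner/$L^2$-limit definition of the integral, and the continuity of $g$ makes the "zero derivative implies constant" step legitimate. Your approach is softer and arguably cleaner for the bare density statement; the paper's approach has the mild advantage of exhibiting explicit approximants (the mollified increments), which is in the same constructive spirit as the quantitative estimates pursued later, but nothing downstream actually requires that explicitness, so your proof would serve equally well here.
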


\begin{proof} 
By observing that $\int_\real \phi'(t)X_t\, dt= \int_\real
\phi'(t)(X_t-X_{T+a/2})\, dt$ we see that the elements in question are
contained in $E_{T,a}.$ Conversely, let $\phi\in\smooth (\real )$
satisfy $\int_\real \phi (t)\, =1 .$ Denote $\phi_\varepsilon
(x)=\varepsilon^{-1}\phi (x\varepsilon).$ By the $L^2$-continuity we
have that for any $t_1,t_2\in (T,T+a)$
$$
X_{t_1}-X_{t_2}=\lim_{\varepsilon\to 0}(\int_\real X_u(\phi_\varepsilon (t_1+u)
-\phi_\varepsilon (t_2+u))\, du.
$$ 
Observe that we may write $\psi'=\phi_\varepsilon (t_1+\cdot )
-\phi_\varepsilon (t_2+\cdot )$ for suitable $\psi\in\smooth (\real
).$ This yields the claim.
\end{proof}

Next we verify that the $L^2$-norm of a random variable of the form
\refeq{eq2.10} equals the norm of $\phi$ in a corresponding
homogeneous Sobolev space. For later purposes we first state an auxiliary result
that is valid in all dimensions.

\begin{lemma}\label{le:2.15} 
Assume that $H\in (0,1)$ and the
functions $\phi , \psi \in \smooth (\real^n)$ satisfy
$\int_{\real^n}\phi\, dx = \int_{\real^n}\psi\, dx =0.$ Then
\beqla{eq:2.104} 
&&\int_{\real^n}\int_{\real^n} \frac{1}{2}\big(
|u|^{2H}+|v|^{2H}-|u-v|^{2H}\big) \phi (u)\overline\psi (v)\, dudv\\ &=&
-2^{n+2H-1}\pi^{n/2}\frac{\Gamma (n/2+H)}{\Gamma (-H)})(\phi
,\psi)_{\homog^{-n/2-H,2}(\real^n)}.\nonumber 
\eeq
\end{lemma}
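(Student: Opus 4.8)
The plan is to exploit the two vanishing-integral hypotheses to discard the two ``diagonal'' terms and then to recognize the surviving double integral as a Fourier-side pairing against the (distributional) Fourier transform of $|\cdot|^{2H}$. Concretely, since $\widehat\phi(0)=(2\pi)^{-n/2}\int_{\real^n}\phi\,dx=0$ and likewise $\widehat\psi(0)=0$, the terms containing $|u|^{2H}$ and $|v|^{2H}$ factor as a product of a finite integral with $\int\psi=0$ (resp.\ with $\int\phi=0$), hence vanish. Thus the left-hand side of \refeq{eq:2.104} reduces to $-\tfrac12\int_{\real^n}\int_{\real^n}|u-v|^{2H}\phi(u)\overline{\psi(v)}\,du\,dv$. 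Substituting $w=u-v$ and using Fubini (legitimate because $\phi,\psi$ are compactly supported while $|\cdot|^{2H}$ is locally integrable) rewrites this as $-\tfrac12\int_{\real^n}|w|^{2H}\Phi(w)\,dw$, where $\Phi(w):=\int_{\real^n}\phi(v+w)\overline{\psi(v)}\,dv$ is a compactly supported smooth cross-correlation whose Fourier transform is $\widehat\Phi=(2\pi)^{n/2}\widehat\phi\,\overline{\widehat\psi}$.

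Next I would apply the Parseval/multiplication formula to move the Fourier transform onto $|\cdot|^{2H}$. Writing $\Phi$ as the Fourier transform of the test function $(2\pi)^{n/2}\widehat\phi(-\cdot)\overline{\widehat\psi(-\cdot)}$, and invoking the formula \refeq{eq:1.90} in the (formally continued) case $\alpha=-2H$, namely $\widehat{|\cdot|^{2H}}(\xi)=d_{n,-2H}|\xi|^{-n-2H}$ with $d_{n,-2H}=2^{n/2+2H}\Gamma(n/2+H)/\Gamma(-H)$, the pairing becomes $(2\pi)^{n/2}d_{n,-2H}\int_{\real^n}|\xi|^{-n-2H}\widehat\phi(\xi)\overline{\widehat\psi(\xi)}\,d\xi$ after the change of variable $\xi\mapsto-\xi$. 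The last integral is exactly $(\phi,\psi)_{\homog^{-n/2-H,2}(\real^n)}$, and collecting constants via $(2\pi)^{n/2}2^{n/2+2H}=2^{n+2H}\pi^{n/2}$ together with the prefactor $-\tfrac12$ yields the asserted identity.

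The step requiring real care is the Fourier transform of $|\cdot|^{2H}$: since $-n-2H<-n$, the function $|\xi|^{-n-2H}$ is not locally integrable at the origin, so $\widehat{|\cdot|^{2H}}$ is only defined by finite-part regularization and may a priori differ from $d_{n,-2H}|\xi|^{-n-2H}$ by a distribution supported at $\{0\}$. Here the two hypotheses of the lemma intervene decisively: because $\widehat\phi(0)=\widehat\psi(0)=0$, the test function $(2\pi)^{n/2}\widehat\phi(-\cdot)\overline{\widehat\psi(-\cdot)}$ vanishes together with its gradient at the origin, so any distribution supported at $\{0\}$ of order $\le 1$ (the only type that can occur at the homogeneity degree $-n-2H$ for $H\in(0,1)$) annihilates it; and because $H<1$ one has $-n-2H+2>-n$, so the regularized pairing coincides with the absolutely convergent integral $\int_{\real^n}|\xi|^{-n-2H}\widehat\phi\,\overline{\widehat\psi}\,d\xi$. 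Thus all origin ambiguities drop out, which is precisely why the statement is restricted to $H\in(0,1)$ and to zero-mean $\phi,\psi$; the remaining verifications (validity of Fubini, the value of $d_{n,-2H}$, and the bookkeeping of constants) are routine.
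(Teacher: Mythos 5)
Your argument is correct and arrives at the paper's formula with the right constant, but the central analytic step is carried out by a genuinely different device. The paper proves the Fourier-side identity \refeq{eq:2.105} for the kernel $|u-v|^{-\alpha}$ only in the classical range $\alpha\in(0,n)$ (convolution with $u_\alpha$, Parseval, and \refeq{eq:1.90}), then uses the zero-mean hypothesis to show that \emph{both} sides are analytic in $\alpha$ on a neighbourhood of $(-2,n)$ and extends the identity to $\alpha=-2H$ by analytic continuation, discarding the diagonal terms by Fubini only at the end. You discard the diagonal terms first and then compute the distributional Fourier transform of the growing function $|\cdot|^{2H}$ directly, which forces you to confront the finite-part regularization of $|\xi|^{-n-2H}$ and a possible ambiguity by a distribution supported at the origin. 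You resolve this correctly: by homogeneity the ambiguity is a combination of $\partial^{\alpha}\delta$ with $|\alpha|=2H$, hence of order at most one for $H\in(0,1)$, and the second-order vanishing of $\widehat\phi\,\overline{\widehat\psi}$ at the origin --- exactly where the zero-mean hypothesis enters --- annihilates it and makes the Fourier-side integral absolutely convergent since $-n-2H+2>-n$. The trade-off is that the paper's route never has to mention regularized or homogeneous distributions (at the price of checking analyticity of both sides in $\alpha$), whereas your route is more direct but leans on the parenthetical claim about the order of the origin-supported term, which deserves the explicit one-line homogeneity justification. The constant bookkeeping, $-\tfrac12\,(2\pi)^{n/2}\,2^{n/2+2H}\,\Gamma(n/2+H)/\Gamma(-H)=-2^{n+2H-1}\pi^{n/2}\,\Gamma(n/2+H)/\Gamma(-H)$, checks out against the statement.
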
.

\begin{proof}
We first claim that for $\alpha \in (0,n)$ 
\beqla{eq:2.105}
&&\int_{\real^n}\int_{\real^n} |u-v|^{-\alpha} \phi (u)\overline{\psi
(v)}\, dudv = (2\pi )^{n/2}d_{n,\alpha}\int_{\real^n}|\xi
|^{\alpha-n}\widehat\phi (\xi )\overline{\widehat\phi (\xi )}\, d\xi .
\eeq 
This is immediate by \refeq{eq:1.90} and the Parseval formula
since the left hand side above can be written as
$\int_{\real^n}g\overline{\psi}\, dx$ where $g$ is obtained as the
convolution $g=u_\alpha*\phi$, whence its Fourier transform equals
$(2\pi )^{n/2}d_{n,\alpha}|\xi|^{n-\alpha}\widehat \phi (\xi )$.  By
the assumption we see that the Fourier transforms of $\phi$ and $\psi$
satisfy $|\phi (\xi )|,|\psi (\xi )|\leq c|\xi |$ near the
origin. Moreover, they decay polynomially as $|\xi |\to\infty .$ These
observations verify that the right hand side of \refeq{eq:2.105} is
analytic as a function of $\alpha$ in a neighbourhood of the open line
segment $\alpha\in (-2,n).$ Since the left hand side of
\refeq{eq:2.105} is likewise analytic in the same neighbourhood we
deduce by analytic continuation that \refeq{eq:2.105} holds true for
all $\alpha\in (-2,n).$ The claim follows as we substitute $\alpha
=-2H$ in \refeq{eq:2.105} and observe that by Fubini the terms
$|u|^{2H}$ and $|v|^{2H}$ make no contribution to the integral in the
left hand side of \refeq{eq:2.104}.
\end{proof}

\begin{corollary}\label{co:2.1} Let $H\in (0,1)$ and assume that
 $\phi_1,\phi_2\in\smooth (\real)$ are real-valued. Then
$$
\expec \left( (\int_\real \phi_1'(t)X_t\, dt)(\int_\real
\phi_2'(t)X_t\, dt)\right ) =a_H(\phi_1,\phi_2)_{\homog^{\half -H,2}},
$$
where $a_H:=\sin (\pi H )\Gamma (1+2H)>0.$ Especially, there is an isometric and bijective
isomorphism $J: E_{(-\infty,\infty )}\to \homog^{1/2-H,2}(\real )$
so that for each interval $(t,t')\subset\real$ we have  $J(E_{(t,t')})=\homog_0^{1/2-H,2}((t,t')).$
\end{corollary}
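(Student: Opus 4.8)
The plan is to reduce everything to Lemma~\ref{le:2.15} with $n=1$. First I would recall that the covariance of the FBM is $\expec(X_uX_v)=\thalf(\abs{u}^{2H}+\abs{v}^{2H}-\abs{u-v}^{2H})$, which is precisely the kernel appearing in \refeq{eq:2.104}. Since $t\mapsto X_t$ is $L^2$-continuous the stochastic integrals are $L^2(\Omega)$-limits of Riemann sums, so the covariance bilinear form passes to the limit and
\[
\expec\Big(\big(\textstyle\int_\real\phi_1'X_t\,dt\big)\big(\int_\real\phi_2'X_t\,dt\big)\Big)
=\int_\real\int_\real\thalf\big(\abs{u}^{2H}+\abs{v}^{2H}-\abs{u-v}^{2H}\big)\phi_1'(u)\phi_2'(v)\,du\,dv.
\]
The key observation is that $\psi_i:=\phi_i'\in\smooth(\real)$ automatically satisfy $\int_\real\psi_i\,dx=0$ because $\phi_i$ has compact support. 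Hence Lemma~\ref{le:2.15} applies with $\phi=\phi_1'$, $\psi=\phi_2'$ and yields the right-hand side as $-2^{2H}\sqrt{\pi}\,\Gamma(\thalf+H)/\Gamma(-H)$ times $(\phi_1',\phi_2')_{\homog^{-1/2-H,2}(\real)}$.

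Next I would shift the smoothness index from $-\thalf-H$ to $\thalf-H$ by moving the two derivatives onto the weight on the Fourier side. Since $\widehat{\phi'}(\xi)=i\xi\widehat\phi(\xi)$, definition \refeq{eq1.6} gives $(\phi_1',\phi_2')_{\homog^{-1/2-H,2}}=\int_\real\abs{\xi}^{2}\abs{\xi}^{-1-2H}\widehat{\phi_1}\overline{\widehat{\phi_2}}\,d\xi=(\phi_1,\phi_2)_{\homog^{1/2-H,2}}$. It then remains only to check the constant, i.e. that $-2^{2H}\sqrt\pi\,\Gamma(\thalf+H)/\Gamma(-H)=\sin(\pi H)\Gamma(1+2H)=a_H$; this is a short manipulation using Legendre's duplication formula $\Gamma(1+2H)=\pi^{-1/2}2^{2H}\Gamma(\thalf+H)\Gamma(1+H)$ together with the reflection formula $\Gamma(-H)\Gamma(1+H)=-\pi/\sin(\pi H)$. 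Positivity of $a_H$ for $H\in(0,1)$ is then immediate.

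For the isomorphism I would define $J^{-1}$ on a dense set by $\phi\mapsto a_H^{-1/2}\int_\real\phi'X_t\,dt$ for $\phi\in\smooth(\real)$. The covariance formula just proved shows this is norm-preserving from $(\smooth(\real),\norm{\cdot}_{\homog^{1/2-H,2}})$ into the Gaussian space. Since $\thalf-H\in(-\thalf,\thalf)$ for $H\in(0,1)$, the preliminary discussion of Section~\ref{se:sobolev} gives that $\smooth(\real)$ is dense in $\homog^{1/2-H,2}(\real)$, so the map extends to an isometric embedding of all of $\homog^{1/2-H,2}(\real)$. To see that it is onto $E_{(-\infty,\infty)}$ I would invoke Lemma~\ref{le:2.1}: every increment $X_u-X_v$ lies in some $E_{(T,T+a)}$ and is therefore approximable by elements $\int\phi'X_t\,dt$ with $\phi\in\smooth(\real)$; hence these elements are dense in $E_{(-\infty,\infty)}$, and as an isometry has closed range the embedding is surjective. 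Thus $J$ is the asserted bijective isometry.

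Finally, the localized identity $J(E_{(t,t')})=\homog_0^{1/2-H,2}((t,t'))$ follows by running the same density argument on a fixed interval. By Lemma~\ref{le:2.1} the set $\{\int\phi'X_s\,ds:\phi\in\smooth(t,t')\}$ is dense in $E_{(t,t')}$, while by definition $\homog_0^{1/2-H,2}((t,t'))$ is the closure of $\smooth(t,t')$ in $\homog^{1/2-H,2}(\real)$; since $J^{-1}$ carries $\smooth(t,t')$ isometrically onto $a_H^{-1/2}\{\int\phi'X_s\,ds\}$ and an isometry respects closures, the two closed subspaces correspond under $J$. I expect the only genuinely delicate points to be the surjectivity bookkeeping (passing from the bounded intervals of Lemma~\ref{le:2.1} to all of $\real$) and keeping track of the multiplicative constant $a_H$; the entire analytic content is already carried by Lemma~\ref{le:2.15}.
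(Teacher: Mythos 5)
Your proposal is correct and follows essentially the same route as the paper: reduce the covariance computation to Lemma~\ref{le:2.15} with $n=1$ (noting that $\phi_j'$ has mean zero), shift the Sobolev index from $-\tfrac12-H$ to $\tfrac12-H$ via $\widehat{\phi'}(\xi)=i\xi\widehat\phi(\xi)$, simplify the constant with the duplication and reflection formulas, and deduce the isometry statement from Lemma~\ref{le:2.1} together with the density of $\smooth$ in the homogeneous Sobolev spaces. Your treatment of the surjectivity and of the localized identity $J(E_{(t,t')})=\homog_0^{1/2-H,2}((t,t'))$ just makes explicit what the paper leaves as ``follows immediately by Lemma~\ref{le:2.1}.''
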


\begin{proof} 
Let us denote
$$
  A:=\expec \left( (\int_\real \phi_1'(t)X_t\, dt)(\int_\real
  \phi_2'(t)X_t\, dt)\right )
$$
By the definition of the fractional Brownian motion with the Hurst
parameter $H\in (0,1)$ we have 
\beqla{eq2.11}
A&=&\half\int_{\real\times
\real}\phi_1'(u)_2\phi'(s)(|s|^{2H}+|u|^{2H}-|s-u|^{2H})\,ds\, du \\
&=&a_H(\phi'_1,\phi'_2)_{\homog^{-1/2-H,2}}
=a_H(\phi_1,\phi_2)_{\homog^{\half -H,2}}.  
\eeq 
Above we used Lemma \ref{le:2.15} to obtain the first
equality. Observe that the functions $\phi_1'$ and $\phi_2'$
automatically have mean zero.  The last equality follows directly from
the fact that the Fourier transfrom of $\phi_j'$ equals $i\xi
\widehat\phi_j (\xi )$, $j=1,2.$ The constant is simplified by
applying the standard formulas for the Gamma functions, see
e.g. \cite[5.2.4]{Ahlfors}. The last statement of the Corollary
follows immediately by Lemma \ref{le:2.1}.
\end{proof}

\begin{remark}\label{re:2.1}
Note that $a_H$ takes the value $1$ for $H=1/2$ and tends to zero
as $H\to 1^-$ or $H\to 0^+.$
\end{remark}

Let us observe that if the supports of $\phi_1$ and $\phi_2$ are
disjoint, we are free to integrate by parts in \refeq{eq2.11} and
obtain the formula 
\beqla{eq:2.15} &&\expec \left( (\int_\real
\phi_1'(t)X_t\, dt)(\int_\real \phi_2'(t)X_t\, dt)\right )\\
&=&H(2H-1)\int_{\real\times
\real}\frac{\phi_1(u)\phi_2(v)}{|u-v|^{2-2H}}
\, dudv. \nonumber 
\eeq 
Here it is interesting to observe
the sign of the factor $H(2H-1)$ for different values of the Hurst
parameter $H.$

We are now ready to prove the main result of the paper.

\begin{theorem}\label{th:2.1}
Fractional Brownian motions with $H\in(0,1)$ possess local
independence. Moreover, there is a constant $r_H\ge0$ {\rm (}with
$r_H>0$ for $H\not=1/2${\rm )} such that 
\beqla{eq:2.19}
\cos(\sphericalangle (E_{t_1,\varepsilon},E_{t_2,\varepsilon})&=&
r_H(\varepsilon/|t_1-t_2|)^{2-2H}+ O(\varepsilon^{3-2H})\quad
\mbox{as}\;\; \varepsilon\to 0,  \nonumber 
\eeq 
and (with some $\delta_H>0$)
\beqla{eq:2.190} 
I (E_{t_1,\varepsilon}:E_{t_2,\varepsilon})
&=&\frac{1}{2}r^ 2_H(\varepsilon/|t_1-t_2|)^{4-4H} +
O(\varepsilon^{4-4H+\delta_H})\quad \mbox{as}\;\; \varepsilon\to
0. \nonumber 
\eeq
\end{theorem}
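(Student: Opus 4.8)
The plan is to reduce everything to computing the norm of the product of projections $P_{E_{t_2,\varepsilon}} P_{E_{t_1,\varepsilon}}$ and its Hilbert-Schmidt norm, and then feed these into Corollary \ref{co:4.1}. By the isometry $J$ of Corollary \ref{co:2.1}, the Gaussian subspaces $E_{t_j,\varepsilon}$ correspond to the Sobolev subspaces $\homog_0^{1/2-H,2}((t_j-\varepsilon,t_j+\varepsilon))$. So the entire problem becomes the following functional-analytic one: with $s=1/2-H$, estimate the projections between $\homog_0^{s,2}(I_1)$ and $\homog_0^{s,2}(I_2)$ inside $\homog^{s,2}(\real)$, where $I_1,I_2$ are small intervals of length $2\varepsilon$ centered at $t_1,t_2$. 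By self-similarity and translation invariance of the homogeneous spaces (recall the scaling rule $\|\phi(k\cdot)\|_{\homog^{s,2}}=k^{s-1/2}\|\phi\|_{\homog^{s,2}}$ used in Lemma \ref{le:1.2}), I may normalize $|t_1-t_2|=1$ and let $\varepsilon\searrow0$, so the task is to understand two shrinking intervals at fixed separation.

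\textbf{The main computation.} The quantity $\cos\sphericalangle(E_{t_1,\varepsilon},E_{t_2,\varepsilon})=\|P_BP_A\|$ measures how much the two Sobolev subspaces overlap. The key observation is that elements of $\homog_0^{s,2}(I_1)$ are distributions supported on $\overline{I_1}$ (Lemma \ref{le:1.10}(ii)), so the inner product $(\phi_1,\phi_2)_{\homog^{s,2}}$ of two such elements, supported on disjoint intervals, is given by the off-diagonal kernel. Concretely, for $\phi_j=\psi_j'$ with $\psi_j$ supported near $t_j$, formula \refeq{eq:2.15} shows the covariance is governed by the kernel $|u-v|^{2H-2}$, which for $u\in I_1$, $v\in I_2$ at separation $\approx|t_1-t_2|$ is smooth and well-separated from its singularity. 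Expanding this kernel in a Taylor series around $(t_1,t_2)$ and using that admissible increment-generating functions integrate to zero (the mean-zero condition from Lemma \ref{le:2.15}), the leading interaction term is rank-one and scales like $\varepsilon^{2-2H}$ in the appropriately normalized inner product; this produces the constant $r_H$, which vanishes precisely when $H=1/2$ (since then the kernel factor $H(2H-1)$ is zero and the increments are genuinely independent). Carefully identifying the leading singular vector and the size of the remainder gives the stated expansion $\cos\sphericalangle = r_H(\varepsilon/|t_1-t_2|)^{2-2H}+O(\varepsilon^{3-2H})$.

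\textbf{From angle to information.} For the mutual information I would apply the two-sided bound \refeq{eq:4.86} of Corollary \ref{co:4.1}. The lower bound $\frac12\|P_BP_A\|_{HS}^2$ and the upper bound differ only by the factor $(1+\frac{\|P_BP_A\|}{2(1-\|P_BP_A\|)})$, which tends to $1$ as $\varepsilon\searrow0$; hence $I(E_{t_1,\varepsilon}:E_{t_2,\varepsilon})\sim\frac12\|P_BP_A\|_{HS}^2$. So I must compute the Hilbert-Schmidt norm, not just the operator norm. Since the leading part of the interaction operator is rank-one with singular value $\sim r_H\varepsilon^{2-2H}$, its squared Hilbert-Schmidt norm is dominated by the same $r_H^2\varepsilon^{4-4H}$ as the square of the operator norm; the remaining (higher-order) singular values from the Taylor expansion contribute only to the error term $O(\varepsilon^{4-4H+\delta_H})$ for some $\delta_H>0$. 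This yields \refeq{eq:2.190}. Finiteness of the Hilbert-Schmidt norm — which is needed even to invoke Theorem \ref{th:4.10} — follows from the Hilbert-Schmidt embedding $W^{1,2}(I)\subset\homog_0^{s,2}(I)$ established in Lemma \ref{le:1.10}(iv), composed with the smoothing effect of the nonsingular kernel across disjoint intervals.

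\textbf{The hard part} will be controlling the Hilbert-Schmidt norm sharply enough to pin down both the leading coefficient $r_H^2$ and a genuine power-saving error exponent $\delta_H$. Getting the operator norm (hence the angle) is comparatively soft — one really only needs the top singular value — but the Hilbert-Schmidt norm sums the squares of \emph{all} singular values, so I must show the non-leading contributions from the Taylor remainder of the kernel $|u-v|^{2H-2}$ are summable and strictly subordinate. This requires a careful orthogonal decomposition of the interaction operator (for instance by expanding both $\phi_1$ and $\phi_2$ against a suitable basis adapted to the shrinking intervals, and tracking how each monomial moment interacts through successive derivatives of the kernel) together with the uniform norm estimates of Lemma \ref{le:1.2} to bound the Sobolev norms of the resulting test functions. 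The mean-zero constraint, which kills the naive leading term and is what makes the first surviving term scale as $\varepsilon^{2-2H}$ rather than a constant, must be used consistently at each order of the expansion.
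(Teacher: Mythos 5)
Your proposal follows essentially the same route as the paper: the isometry of Corollary \ref{co:2.1} turns the problem into estimating $\|P_BP_A\|$ and $\|P_BP_A\|_{HS}$ for the subspaces $\homog_0^{1/2-H,2}$ of two rescaled intervals, the leading term is the rank-one piece obtained by freezing the kernel $|u-v|^{2H-2}$ at the separation (whose Hilbert--Schmidt and operator norms coincide, giving $r_H\varepsilon^{2-2H}$), the remainder is controlled in Hilbert--Schmidt norm by the smoothness of the off-diagonal kernel combined with the Hilbert--Schmidt embedding of Lemma \ref{le:1.10}(iv), and Corollary \ref{co:4.1} converts these into the angle and the mutual information. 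The only real difference is that you anticipate needing a full orthogonal/Taylor decomposition to all orders for the error term, whereas subtracting just the constant term and bounding the remainder kernel and its first derivatives uniformly by $O(\varepsilon^{3-2H})$ already suffices once it is fed through the $W^{1,2}\to\homog^{-s,2}$ embedding.
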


\begin{proof} 
By scaling invariance and stationarity it is equivalent to show
that 
\beqla{eq:2.20} \cos (\sphericalangle (E_{(0,1)},E_{(k,k+1)}))&=&
r_Hk^{2H-2}+ O(k^{2H-3}) \quad\mbox{as}\;\; k\to\infty \quad \mbox{and}  \\
 I (E_{(0,1)}:E_{(k,k+1)})&=&
\frac{1}{2}r_H^2k^{4H-4}+ O(k^{4H-5}) \quad\mbox{as}\;\; k\to\infty.
\eeq 
Denote $s:=1/2-H\in (-1/2,1/2)$ together with  $A:=\homog^{s,2}_0(k,k+1)$
and $B:=\homog^{s,2}_0(0,1)$,
considered as subspaces of the Hilbert space $\homog^{s,2}(\real ).$
Let $P_A$ (resp. $P_B$) stand for the orthogonal projection on $A$ (resp. $B$).
We will consider the operator
$$
S:= P_B:A\to B.
$$
Since $S=(P_BP_A)_{|A}$ and $(P_BP_A)_{|A^\perp}=0$, we obtain that
$\| S\|=\| P_BP_A\|$ and $\| S\|_{HS}=\| P_BP_A\|_{HS}$. Hence
Corollaries \ref{co:4.1} and \ref{co:2.1} yield that
\beqla{eq:2.21} \cos (\sphericalangle (E_{(0,1)},E_{(k,k+1)}))=\|S\|\quad 
\eeq
and
\beqla{eq:2.211} 
 \frac{1}{2}\| S\|_{HS}^2\leq I (E_{(0,1)},E_{(k,k+1)})
\leq  \frac{1}{2}\| S\|_{HS}^2(1+\| S\|)
\eeq
as soon as $\| S\|<1/2$. 

In order to estimate the norm and the Hilbert-Schmidt norm of the
operator $S$ we will make use of the decay of the kernel in
\refeq{eq:2.15}, and the even faster decay of its derivatives. For
that end we need to first factorize $S$ properly through a suitable
 integral operator. Assume thus that $k\geq 2$ and  $\phi\in\smooth
(k,k+1)\subset A.$ Then by definitions and formula \refeq{eq:2.15} we
see that $S\phi\in B$ is the unique element that satisfies for each
$\psi\in \smooth (0,1)$
\beqla{eq:2.22}
(S\phi,\psi)_{\homog_0^{s,2}(0,1)}
&=&(\phi,\psi)_{\homog_0^{s,2}(\real )}
=H(2H-1)\int_{(0,1)\times (k,k+1)}\frac{\phi (y)\psi (x)}{|x-y|^{2-2H}}\, dxdy
\nonumber\\ 
&=&\int_0^1\psi (x)(R\phi)(x)\,
dx, 
\eeq 
where $R$ 
stands for the integral operator
$$
R\phi (x):= H(2H-1)\int_{(k,k+1)}\frac{\phi (y)}{|x-y|^{2-2H}}\, dy.
$$
By the smoothness of the kernel we immediately see that $R$ is well-defined and, in fact
$$
R\, (\homog_0^{s,2}(k,k+1))\subset W^{1,2}((0,1)).
$$
Let $G:\homog^{-s,2}(0,1)\to \homog_0^{s,2}((0,1))$ be the isometric
isomorphism from \refeq{eq:1.00}. 
According to \refeq{eq:2.22} we may  factorize
$$
S=GR.
$$

Let $V: \homog_0^{s,2}(k,k+1)\to\homog^{-s,2}(0,1)$ 
stand for the one-dimensional operator
$$
V\phi (x):= \int_{(k,k+1)}{\phi (y)}\, dy,\quad \mbox{for} \:x\in (0,1).
$$
Thus $V\phi$ is constant on $(0,1).$ We  decompose
\beqla{eq:2.24}
S &=& H(2H-1)k^{2H-2}GV +G\big( R-H(2H-1)k^{2H-2}V \big).\nonumber
\eeq
If we show that
\beqla{eq:2.25}
&&\|\Big( R-H(2H-1)k^{2H-2}V\Big):
  \homog_0^{s,2}(k,k+1)\to\homog^{-s,2}(k,k+1)\|_{HS}\\
&=& O(k^{2H-3}),\nonumber
\eeq
then, according to \refeq{eq:2.21}-\refeq{eq:2.211} and the fact that
for the one-dimensional operator $GV$ it holds that $\| GV \|_{HS}=\|
GV\|$ (the value is independent of $k$), both of the asymptotics in
\refeq{eq:2.20} follow immediately. Here we also keep in mind that the
Hilbert-Schmidt norm always dominates the operator norm.

Observe towards \refeq{eq:2.25} that for $x\in (0,1)$ and $\phi\in \smooth (k,k+1)$
we may write
$$
\big( (R-(H(2H-1)k^{2H-2}V)\phi \big)(x)=c\int_{(k,k+1)}u(x,y)\phi (y)\, dy,
$$
where a simple computation shows that the the kernel
$u(x,y)=|x-y|^{2H-2}-k^{2H-2}$ satisfies
$$
\|\big((\frac{d}{dx})^\alpha(\frac{d}{dy})^\beta u\big)(x,\cdot)
  \|_{L^\infty (k,k+1)}
\leq ck^{2H-3},\quad \alpha,\beta\in \{ 0,1\},\ x\in(0,1).
$$
By Lemma \ref{le:1.10}(iii) we have 
$\|\cdot \|_{\homog^{-s,2}((k,k+1))}\leq \|\cdot \|_{W^{1,2}((k,k+1))}$.
Hence the previous estimates yield for  fixed $x\in (0,1)$ the estimate
\beqla{eq:2.28}
\| u(x,\cdot )\|_{\homog^{-s,2}((k,k+1))}
\leq\| u(x,\cdot )\|_{W^{1,2}((k,k+1))}\leq c'k^{2H-3}
\eeq
and, similarly
\beqla{eq:2.29}
\|(\frac{d}{dx}) u(x,\cdot )\|_{\homog^{-s,2}((k,k+1))}
\leq\| (\frac{d}{dx}) u(x,\cdot )\|_{W^{1,2}((k,k+1))}\leq c'k^{2H-3}.
\eeq

Assume that $\|\phi \|_{\homog_0^{s,2}((k,k+1))}=1$. 
The duality \refeq{eq:1.0}, estimates \refeq{eq:2.28} and
\refeq{eq:2.29} show that
$$
\max_{\alpha\in\{ 0,1\}} \|(\frac{d}{dx})^\alpha
\Big( \big(R-(H(2H-1)k^{2H-2}V\big)\phi\Big)\|_{L^\infty (0,1)}
\leq  c'k^{2H-3}.
$$
This especially implies that
\beqla{eq:2.30}
\|\Big( R-(H(2H-1)k^{2H-2}V\Big) :\homog_0^{s,2}(k,k+1)\to W^{1,2}(k,k+1)\|
\leq  c_2k^{2H-3}.\nonumber
\eeq
Let us denote by $I:W^{1,2}((0,1))\to \dot W^{-s,2}((0,1))$ the
natural imbedding.  According to Lemma \ref{le:1.10} (iv) we have 
$\|I\|_{HS}<\infty $. We finally obtain
\beqla{eq:2.31}
&&\|\big( R-(H(2H-1)k^{2H-2}V\big) :
  \homog_0^{s,2}(k,k+1)\to \homog^{-s,2}(k,k+1)\|_{HS}\nonumber\\
&\leq& \| I\|_{HS}\|\big( R-(H(2H-1)k^{2H-2}V\big) :
  \homog_0^{s,2}(k,k+1)\to  W^{1,2}(k,k+1)\|\nonumber\\
&\leq & c_3k^{2H-3}.\nonumber
\eeq
This establishes \refeq{eq:2.25} and completes the proof of the theorem.
\end{proof}

\begin{remark}\label{rem:2.2} 
A closer inspection of the above proof reveals that the constant $r_H$
in Theorem \ref{th:2.1} satisfies 
$r_H=H|2H-1|\|\chi_{(0,1)}\|^2_{\homog^{H-1/2,2}(0,1)}$. 
Especially, $r_H$ tends to zero as $H\to 1/2$. Moreover, one also
checks that it is possible to choose $\delta_H=\min(1,2-2H)$.
\end{remark}

After Theorem \ref{th:2.1} it is natural to ask whether similar
phenomena take place if only one of the intervals in consideration
tends to a point. The answer is positive again. Heuristically one
might expect that the speed of convergence is only half of what it was
before, and this actually turns out to be true.

\begin{theorem}\label{th:2.2}
Let $t>0$. Then there are constants $r'_H\geq 0$ {\rm (}with $r'_H>0$ for
$H\not=1/2${\rm )} and $\delta'_H>0$ such that as $\varepsilon\to 0$ one has
\beqla{eq:2.191} 
\cos(\sphericalangle (E_{(-\infty,0)},E_{t,\varepsilon}))&=&
r'_H(\varepsilon/t)^{1-H}+ O(\varepsilon^{2-H})\quad
\quad\mbox{and} \nonumber\\
I (E_{(-\infty,0 )}:E_{t,\varepsilon})&=&
\frac{1}{2}(r'_H)^ 2(\varepsilon/t)^{2-2H}+ O(\varepsilon^{2-2H+\delta'_H})\quad
\mbox{as}\;\; \varepsilon\to 0.  \nonumber
\eeq
\end{theorem}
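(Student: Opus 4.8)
The plan is to run the proof of Theorem~\ref{th:2.1} again, the one genuinely new feature being that one of the two intervals is a half-line. First I would eliminate $\varepsilon$ by self-similarity alone (no translation is needed, since $0$ is a fixed point of the scaling): replacing $X_u$ by $X_{2\varepsilon u}$ shows that both $\sphericalangle(E_{(-\infty,0)},E_{t,\varepsilon})$ and $I(E_{(-\infty,0)}:E_{t,\varepsilon})$ agree with the corresponding quantities for $E_{(-\infty,0)}$ and $E_{(k,k+1)}$, where $k:=t/(2\varepsilon)-1/2\to\infty$. With $s:=1/2-H\in(-1/2,1/2)$, I transport everything to the Sobolev side via the isometry $J$ of Corollary~\ref{co:2.1}, whose correspondence $J(E_I)=\homog^{s,2}_0(I)$ extends to $I=(-\infty,0)$ by taking closures of the monotone unions $E_{(-N,0)}\uparrow E_{(-\infty,0)}$. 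Setting $A:=\homog^{s,2}_0((k,k+1))$ and $B:=\homog^{s,2}_0((-\infty,0))$ inside $\homog^{s,2}(\real)$, I study $S:=(P_BP_A)_{|A}=GR$ exactly as in \refeq{eq:2.22}, where now
\[
R\phi(x)=H(2H-1)\int_{(k,k+1)}\frac{\phi(y)}{|x-y|^{2-2H}}\,dy,\qquad x\in(-\infty,0),
\]
and $G:\homog^{-s,2}((-\infty,0))\to B$ is the isometry of \refeq{eq:1.00}. By Corollary~\ref{co:4.1} it then suffices to pin down $\|S\|$ and $\|S\|_{HS}$.

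For the leading term I would freeze the kernel at $y=k$: the operator $S_0:=GR_0$ with $R_0\phi(x):=H(2H-1)(k-x)^{2H-2}\int_{(k,k+1)}\phi$ is rank one, so $\|S_0\|=\|S_0\|_{HS}=H|2H-1|\,\|\chi_{(k,k+1)}\|_{\homog^{-s,2}((k,k+1))}\,\|(k-\cdot)^{-(2-2H)}\|_{\homog^{-s,2}((-\infty,0))}$. The first factor is a positive constant by translation invariance and Lemma~\ref{le:1.10}(iii), while the second is given \emph{exactly} by Lemma~\ref{le:1.2}(ii) with $\alpha=2-2H$ (note that $\alpha>1/2+s$ and $\alpha\ne1$ hold precisely for $H\in(0,1)$, $H\ne1/2$), producing $c(2-2H,s)\,k^{1/2+s-(2-2H)}=c\,k^{H-1}$. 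This isolates $r'_H$ and the power $k^{-(1-H)}\sim(\varepsilon/t)^{1-H}$; for $H=1/2$ the prefactor $H(2H-1)$ vanishes and $r'_H=0$.

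The crux is the error $\|S-S_0\|_{HS}$, and here the argument of Theorem~\ref{th:2.1} cannot be copied verbatim: there the projection landed in a bounded interval, on which one invoked the Hilbert--Schmidt embedding $W^{1,2}\subset\homog^{-s,2}$ of Lemma~\ref{le:1.10}(iv), whereas over the unbounded $(-\infty,0)$ that embedding is not even compact. The remedy I would use is to pass to the adjoint, which interchanges the roles of the two intervals: $S^*-S_0^*=G_A\,(R'-R_0')$, where $R'-R_0'$ now maps $B$ into functions on the \emph{bounded} interval $(k,k+1)$, with kernel $H(2H-1)\,u(x,y)$, $u(x,y)=|x-y|^{2H-2}-(k-x)^{2H-2}$, integrated in $x$ over the half-line. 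For fixed $y\in(k,k+1)$ one has $u(\cdot,y)=(y-\cdot)^{-(2-2H)}-(k-\cdot)^{-(2-2H)}$; writing this difference as the integral of its parameter-derivative and applying Minkowski's inequality together with Lemma~\ref{le:1.2}(ii) (now with $\alpha=3-2H$) gives $\|u(\cdot,y)\|_{\homog^{-s,2}((-\infty,0))}\le c|y-k|\,k^{H-2}\le ck^{H-2}$, and the same bound for $\partial_y u(\cdot,y)=(2H-2)(y-\cdot)^{-(3-2H)}$ follows directly from Lemma~\ref{le:1.2}(ii). By the duality \refeq{eq:1.0} this yields $\|(R'-R_0')\psi\|_{W^{1,2}((k,k+1))}\le ck^{H-2}$ for $\|\psi\|_B\le1$, and composing with the location-uniform Hilbert--Schmidt embedding $W^{1,2}((k,k+1))\subset\homog^{-s,2}((k,k+1))$ of Lemma~\ref{le:1.10}(iv) gives $\|S-S_0\|_{HS}=\|S^*-S_0^*\|_{HS}\le ck^{H-2}$.

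Finally I would assemble the estimates. Since $\|S-S_0\|\le\|S-S_0\|_{HS}\le ck^{H-2}$, the angle obeys $\cos\sphericalangle=\|S\|=\|S_0\|+O(k^{H-2})=r'_H\,k^{-(1-H)}+O(k^{H-2})$, while $\|S\|_{HS}^2=\|S_0\|_{HS}^2+O(\|S_0\|_{HS}\|S-S_0\|_{HS})=\|S_0\|_{HS}^2+O(k^{2H-3})$. Feeding this into the sandwich \refeq{eq:4.86}, valid for large $k$ since $\|S\|\to0$, and noting that its multiplicative correction contributes only $O(\|S\|_{HS}^2\|S\|)=O(k^{3H-3})$, I obtain $I=\tfrac12(r'_H)^2k^{-(2-2H)}+O(k^{3H-3})$. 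Reverting $k\sim t/(2\varepsilon)$ (the $O(1/k)$ discrepancy between $k$ and $t/(2\varepsilon)$ is absorbed into the error terms) yields the two displayed asymptotics with $\delta'_H=1-H>0$. The single real obstacle is the error bound of the third paragraph, where the unbounded domain is circumvented by the adjoint together with Lemma~\ref{le:1.2}(ii); everything else is the bookkeeping of Theorem~\ref{th:2.1} with the interval $(0,1)$ replaced by $(-\infty,0)$.
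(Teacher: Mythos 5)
Your proposal is correct and follows essentially the same route as the paper: the same scaling reduction, the same rank-one approximation obtained by freezing the kernel at the endpoint $k$ (with Lemma~\ref{le:1.2}(ii) giving the exact $k^{H-1}$ leading order), the same Minkowski-plus-Lemma~\ref{le:1.2}(ii) estimate with exponent $3-2H$ for the error kernel, and the same passage through $W^{1,2}((k,k+1))$ and the Hilbert--Schmidt embedding of Lemma~\ref{le:1.10}(iv). The adjoint step you single out as the crux is present implicitly in the paper, which simply sets up the operator $\dot R$ from the outset as mapping $\homog_0^{s,2}((-\infty,0))$ into $\homog^{-s,2}((k,k+1))$, so that the error lands in the bounded interval from the start.
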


\begin{proof}
As in the proof of Theorem \ref{th:2.1} we apply scaling, Corollaries \ref{co:4.1} and
\ref{co:2.1}, and Lemma \ref{le:1.10}(iv) to the effect that it is equivalent to verify 
 in the limit $k\to \infty$ that we have
\beqla{eq:2.40} 
\|\dot S\| =r'_Hk^{H-1}+O(k^{H-2})\quad\mbox{and}\quad \|\widetilde S\|_{HS} =r'_Hk^{H-1}+O(k^{H-2}).
\eeq
Here $ \dot S=\dot G\dot R$, where $\dot G$ stands for the natural isomorphism
$\dot G:\homog^{-s,2}(k,k+1)\to \homog_0^{s,2}((k,k+1))$ provided by \refeq{eq:1.00}, 
 $s:=1/2-H,$ and
$$
\dot R:  \homog_0^{s,2}((-\infty,0))\to\homog^{-s,2}(k,k+1) 
$$ 
is the integral operator
$$
\dot R\phi (x):= H(2H-1)\int_{-\infty}^0\frac{\phi (y)}{|x-y|^{2-2H}}\, dy, \quad \mbox{for}\;  x\in (k,k+1).
$$

This time we consider the auxiliary  operator 
$\dot V : \homog_0^{s,2}((-\infty ,0))\to\homog^{-s,2}(k,k+1)$, where
$$
\dot V\phi (x):= H(2H-1)\int_{-\infty}^0\frac{\phi (y)}{|k-y|^{2-2H}}\, dy, \quad \mbox{for}\;  x\in (k,k+1).
$$
Thus $\dot V$ is one-dimensional since its image contains only constant functions.

According to Lemma \ref{le:1.2} it holds that
$$
\|\;  |k-\cdot|^{2H-2}\|_{\homog^{-s,2}((-\infty ,0))}=ck^{H-1}.
$$
Hence, by one-dimensionality and the duality \refeq{eq:1.0} we infer that
$$
\| \dot V:\homog_0^{s,2}((-\infty,0))\to\homog^{-s,2}(k,k+1)\|=c'k^{H-1}.
$$
By using again the decomposition $\dot S=\dot G\dot V+ \dot G(\dot R-\dot V)$
we deduce, as in the proof of Theorem \ref{th:2.1}, that the one-dimensionality of $\dot V$
and the Hilbert-Schmidt property of the natural imbedding $W^{1,2}((k,k+1))\to \homog^{-s,2}((k,k+1))$
(where the Hilbert-Schmidt norm is independent of $k$) enable us to deduce \refeq{eq:2.40}
as soon as we establish that
\beqla{eq:2.45}
\|\big( \dot R-\dot V\big) :\homog_0^{s,2}(-\infty ,0)\to W^{1,2}(k,k+1)\|
\leq  c_2k^{H-2}.
\eeq

Observe  that $\dot V-\dot R$ has the integral kernel
$\dot u(x,y):=2(2H-1)\big( |x-y|^{2H-2}-|k-y|^{2H-2}\big).$ Clearly \refeq{eq:2.45} follows from
duality and the estimate
\beqla{eq:2.46}
\sup_{x\in (k,k+1)}\|(\frac{d}{dx})^\alpha\dot u(x,\cdot )\|_{\homog^{-s,2}((-\infty ,0))}=O(k^{H-2})\quad \mbox{for}\; \alpha\in
\{ 0,1\} .
\eeq
In turn, for $\alpha =1$ this estimate is a direct consequence of Lemma \ref{le:2.1}. In order to verify
it for $\alpha =0$, we fix $x\in (k,k+1)$ and  apply the same Lemma  as follows:
\beqla{eq:2.47}
\|\dot u(x,\cdot )\|_{\homog^{-s,2}((-\infty ,0))}
&=&c\|\int_k^ x |t-\cdot|^{2H-3}\,dt\|_{\homog^{-s,2}((-\infty ,0))}\nonumber\\
&\leq& c\int_k^ x \|\; |t-\cdot|^{2H-3}\|_{\homog^{-s,2}((-\infty ,0))}dt\leq c'k^{H-2}.\nonumber
\eeq
In the second inequality above we made use of the Minkowski inequality for Banach space norms.
\end{proof}

The remaining cases are simpler to handle and they are collected in the
following theorem.

\begin{theorem}\label{co:2.2}
\noindent {\rm (i)}\quad Let $H\not=\half$. Then
$I(E_{(-\varepsilon,0)}:E_{(0,\varepsilon)})=\infty$ for any
$\varepsilon>0$.

\noindent {\rm (ii)}\quad Let $H\not=\half$. Then
$I(E_{(-\infty,-\varepsilon)}:E_{(\varepsilon,\infty)})=\infty$ for any
$\varepsilon>0$.

\noindent {\rm (iii)}\quad $\sphericalangle(E_{(-\infty,0)},E_{(0,\infty)})>0$.

\noindent {\rm (iv)}\quad Let $t_1<t<t_2$ be arbitrary. Then for
small enough $\varepsilon>0$ it holds that
\beqla{eq:2.50}
I(E_{(-\infty,t_1)\cup (t_2,\infty)}: 
E_{t,\varepsilon})\leq c\varepsilon^{H-1}.
\eeq
\end{theorem}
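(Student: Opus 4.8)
The plan is to transport everything to the Sobolev side via the isometry $J$ of Corollary \ref{co:2.1}, so that, with $s:=\tfrac12-H\in(-\tfrac12,\tfrac12)$, each $E_{(a,b)}$ becomes $\homog_0^{s,2}((a,b))\subset\homog^{s,2}(\real)$, and then to read off finiteness or infinitude of the information from Corollary \ref{co:4.1}: writing $A,B$ for the two Sobolev subspaces, $I(A:B)=\infty$ precisely when $\|P_BP_A\|=1$ or $\|P_BP_A\|_{HS}=\infty$, and otherwise $I(A:B)\le\tfrac12\|P_BP_A\|_{HS}^2(1+\|P_BP_A\|)$. In particular $\|P_BP_A\|_{HS}=\infty$ alone forces $I=\infty$, so for (i)--(ii) it is enough to show the relevant projection product is not Hilbert--Schmidt, and for (iv) it is enough to bound its Hilbert--Schmidt norm.

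I would dispatch (iii) first and directly. With $A=\homog_0^{s,2}((-\infty,0))$ and $B=\homog_0^{s,2}((0,\infty))$, any $f\in A$, $g\in B$ satisfy $f=\chi_{(-\infty,0)}(f+g)$ as locally integrable functions, so the boundedness of the signum multiplier (Lemma \ref{le:1.10}(i)) gives $\|f\|\le c\|f+g\|$ with $c$ independent of $f,g$; the elementary fact that such a uniform one-sided bound forces $\cos\sphericalangle(A,B)<1$ then yields $\sphericalangle(E_{(-\infty,0)},E_{(0,\infty)})>0$.

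For (i) and (ii) the key point is that both quantities are scale invariant and reduce to the single statement $I(E_{(-\infty,0)}:E_{(0,\infty)})=\infty$. Indeed, the self-similarity map $S_a:(X_t)\mapsto(a^{-H}X_{at})$ is a measure-preserving automorphism carrying $E_{(-\varepsilon,0)},E_{(0,\varepsilon)}$ to $E_{(-a\varepsilon,0)},E_{(0,a\varepsilon)}$ (resp. $E_{(-\infty,-\varepsilon)},E_{(\varepsilon,\infty)}$ to $E_{(-\infty,-a\varepsilon)},E_{(a\varepsilon,\infty)}$), so both informations are independent of $\varepsilon$; letting $\varepsilon\nearrow\infty$ in (i) and $\varepsilon\searrow0$ in (ii) and invoking the monotone convergence of mutual information (Theorem \ref{shannonbasic}(iii)) identifies each with $I(E_{(-\infty,0)}:E_{(0,\infty)})$. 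To see the latter is infinite I would prove $\|P_BP_A\|_{HS}^2=\operatorname{tr}(P_AP_BP_A)=\infty$ for the half-line spaces. The operator $T:=P_AP_BP_A$ is nonzero, since the adjacent increments $X_0-X_{-1}$ and $X_1-X_0$ are correlated for $H\ne\tfrac12$ by \refeq{eq:2.15}, and it commutes with the unitary dilations $(U_\lambda f)(x)=\lambda^{1/2-s}f(\lambda x)$, $\lambda>0$, because these preserve both half-lines and hence both projections. Passing to the logarithmic variable turns the dilation group into translations, so $T$ becomes a translation-invariant (Fourier-multiplier) operator on $L^2(\real,d\tau)$; being positive and dilation invariant it is a constant fibre $T_0\otimes I$ over the frequency line, whence $\operatorname{tr}(T)=\operatorname{tr}(T_0)\cdot|\real|=\infty$. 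This gives $\|P_BP_A\|_{HS}=\infty$ and proves (i)--(ii). I expect this Mellin step to be the main obstacle, precisely because the underlying divergence is only logarithmic: a single dyadic bump near the common endpoint contributes an $O(1)$, not a growing, amount, so the infinitude is a statement about the continuous, scale-invariant spectrum of $T$ rather than about any individual mode.

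For (iv) the situation is reversed: since $I_\varepsilon:=(t-\varepsilon,t+\varepsilon)$ lies at a fixed positive distance $\rho>0$ from $\real\setminus(t_1,t_2)$, the kernel $H(2H-1)|x-y|^{2H-2}$ of \refeq{eq:2.15} is bounded on the region in play, so both $\|P_BP_A\|<1$ and $\|P_BP_A\|_{HS}<\infty$, and the information is finite. One must, however, remember that $A:=J(E_{(-\infty,t_1)\cup(t_2,\infty)})$ is \emph{not} supported off $(t_1,t_2)$: it equals $\{f\in\homog^{s,2}(\real):f\text{ is constant on }(t_1,t_2)\}=A_0\oplus\real w'$, where $A_0=\homog_0^{s,2}((-\infty,t_1))\oplus\homog_0^{s,2}((t_2,\infty))$ captures increments with both endpoints in the set and $w'$ is the one extra direction carried by a gap-crossing increment $X_{u_0}-X_{v_0}$. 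Splitting $P_A=P_{A_0}+P_{w'}$, the $A_0$-part is handled exactly as in Theorem \ref{th:2.2}, factoring $P_B|_{A_0}=GR$ through the integral operator $R$ and using the Hilbert--Schmidt embedding $W^{1,2}(I_\varepsilon)\hookrightarrow\homog^{-s,2}(I_\varepsilon)$ of Lemma \ref{le:1.10}(iv); a scaling count gives a contribution $O(\varepsilon^{1-H})$. The rank-one part contributes $\|P_BP_{w'}\|_{HS}=\|P_Bw'\|$, the $\homog^{-s,2}(I_\varepsilon)$-norm of the smooth, essentially constant restriction of $w'$ to the shrinking interval, which by the homogeneous scaling of the norm is again $O(\varepsilon^{1-H})$. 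Hence $\|P_BP_A\|_{HS}\le c\,\varepsilon^{1-H}$, and Corollary \ref{co:4.1} yields $I(E_{(-\infty,t_1)\cup(t_2,\infty)}:E_{t,\varepsilon})\le\tfrac12\|P_BP_A\|_{HS}^2(1+o(1))\le c\,\varepsilon^{2-2H}$, which is $\le c\,\varepsilon^{H-1}$ once $\varepsilon$ is small, establishing \refeq{eq:2.50}.
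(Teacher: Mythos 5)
Your proposal is correct, and for parts (i)--(ii) it takes a genuinely different route from the paper. Part (iii) is identical to the paper's argument (signum multiplier plus Lemma \ref{le:2.1}). For (i)--(ii) the paper argues softly and by contradiction: if $I(E_{(-\varepsilon,0)}:E_{(0,\varepsilon)})$ were finite, then local independence (Theorem \ref{th:2.1}) together with the Tutubalin--Freidlin triviality of the infinitesimal $\sigma$-algebra and the downward continuity of information (Theorem \ref{shannonbasic}(iv)) would force it to decrease to $0$ along shrinking intervals, while self-similarity keeps it constant and positive --- contradiction; (ii) then follows by monotone convergence upward. You instead reduce both parts to $I(E_{(-\infty,0)}:E_{(0,\infty)})=\infty$ and attack the operator $T=P_AP_BP_A$ directly via its invariance under the unitary dilations of $\homog^{s,2}(\real)$. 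This is valid and has the merit of being self-contained (it uses neither Theorem \ref{th:2.1} nor \cite{Tutubalin}), at the price of the Mellin analysis you yourself flag as the main obstacle. One imprecision there: a positive translation-invariant operator on $L^2(\real)$ is a general nonnegative Fourier multiplier, not a ``constant fibre'' $T_0\otimes I$; but you do not need that. It suffices to note that a nonzero positive \emph{compact} operator commuting with the dilation group would have a nonzero finite-dimensional dilation-invariant top eigenspace, hence a joint eigenvector, i.e.\ a function homogeneous of degree $i\tau+s-\thalf$ on the half-line; no such function lies in $\homog^{s,2}(\real)$ (its Fourier transform would make the defining integral \refeq{eq1.6} logarithmically divergent), so $T$ is not compact, a fortiori not trace class, and Theorem \ref{th:4.10} gives $I=\infty$.

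For (iv) your decomposition coincides with the paper's, with one genuine refinement: since $E_S$ is defined as $\cspan\{X_u-X_v:u,v\in S\}$, the space $E_{(-\infty,t_1)\cup(t_2,\infty)}$ contains gap-crossing increments $X_u-X_v$ with $u<t_1<t_2<v$, and is therefore strictly larger than $\cspan(E_{(-\infty,t_1)}\cup E_{(t_2,\infty)})$, which is all the paper's proof treats. Your extra rank-one direction $w'$ and the bound $\|P_Bw'\|=O(\varepsilon^{1-H})$ (via the scaling $\|f\|_{\homog^{H-1/2,2}(I_\varepsilon)}=O(\varepsilon^{1-H})$ for $f$ smooth and bounded with bounded derivative on $I_\varepsilon$, cf.\ the scaling rule used in Lemma \ref{le:1.2}(ii)) close this small gap and still yield $\|P_BP_A\|_{HS}\le c\varepsilon^{1-H}$, hence $I\le c\varepsilon^{2-2H}\le c\varepsilon^{H-1}$ for small $\varepsilon$, as required.
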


\begin{proof}

(i) Assume the contrary, that is, $I(E_{(-\varepsilon,0)}:E_{(0,\varepsilon
)})<\infty$ for some $\varepsilon>0$. Since FBM possesses local
independence, its infinitesimal space is trivial, that is, 
$
\bigcap_{n=1}^\infty E_{(0,\pm\varepsilon/n)}=\{0\}
$
(otherwise the Gaussian space would have uncountable dimension; see 
Proposition 5 of \cite{Mandjes}). By Theorem 1 of \cite{Tutubalin},
this implies the corresponding relation for $\sigma$-algebras, i.e.\
$
\bigcap_{n=1}^\infty\sigma(E_{(0,\pm\varepsilon/n)})=\{\Omega,\emptyset\}
$
up to sets of measure 0 or 1. Theorem \ref{shannonbasic} (iv) then yields
that $\lim_{n\to\infty}I(E_{(-\varepsilon/n,0)}:E_{(0,\varepsilon/n
)})=I(\{\Omega,\emptyset\}:\{\Omega,\emptyset\})=0$. On the other
hand, we have $I(E_{(-\varepsilon,0)}:E_{(0,\varepsilon
)})>0$ when $H\not=\half$. Now, however, the self-similarity of FBM
implies that $I(E_{(-\varepsilon/n,0)}:E_{(0,\varepsilon/n
)})$ does not depend on $n$, and we get a contradiction.

(ii) By self-similarity, Theorem \ref{shannonbasic} (iii) and the
previous claim, we have
\begin{eqnarray*}
I(E_{(-\infty,-\varepsilon)}:E_{(\varepsilon,\infty)})
&=&\lim_{n\to\infty}I(E_{(-\infty,-\varepsilon/n)}:E_{(\varepsilon/n,\infty)})\\
&=&I(E_{(-\infty,0)}:E_{(0,\infty)})\\
&\ge& I(E_{(-\varepsilon,0)}:E_{(0,\varepsilon)})=\infty.
\end{eqnarray*}

(iii) This is an immediate consequence of Lemma \ref{le:1.10}(i) and Lemma
\ref{le:2.1}, since together they imply that for a dense set of
elements $X_1\in E_{(-\infty,0)}$ and $X_2\in E_{(0,\infty)}$ we have that
$$
\max (\| X_1\|, \| X_2\| )\leq c\| X_1-X_2\|.
$$

(iv) Write $A_1=E_{(-\infty ,t_1)}$, $A_2=E_{(t_2,-\infty )}$, and
$B=E_{t,\varepsilon}.$ Since the angle between the
subspaces $A_1$ and $A_2$ is positive, we see that $A:=\cspan
(A_1\bigcup A_2)$ is naturally isomorphic (not necessarily isometric)
to the direct sum $(A_1{\oplus} A_2)_{\ell^2}.$ In this isomorphism
the operator $P_B:A\to B$ conjugates to the operator
$[P_B:A_1\to B,\ P_B:A_2\to B]$, whose Hilbert-Schmidt norm is bounded
by $c\varepsilon^{1-H}$ by Theorem \ref{th:2.2}. This proves the
claim.
\end{proof}

\section{Generalizations and open questions}\label{se:open}

The most natural generalization of FBM to $\real^n$ is the Levy FBM,
which is defined as the Gaussian process $X_u$ indexed by the
parameter $u\in\real^n$ and having the covariance structure
$$
\expec X_uX_v= \frac{1}{2}\big( |u|^{2H}+|v|^{2H}-|u-v|^{2H}\big).
$$
Here $H\in (0,1)$.  As in the one-dimensional case this process has a
version that has Hölder continuous realizations. We refer to
\cite[Chapter 18]{Kahane} for the existence and basic properties of
$n$-dimensional Levy FBM.  We will sketch the proof of an
$n$-dimensional version of Theorem \ref{th:2.1}.  For that end we
first present  an auxiliary result.

\begin{lemma}\label{le:3.1}
Let $n\geq 2$ and $s\in (-n/2-1,-n/2).$ Then there is a constant $c>0$
such that for every $\phi\in \smooth (B(0,1))$ with $\int_{\real^n}\phi \, dx =0$
and $f\in C^{n+1}(\overline{B(0,1)})$ 
it holds that
$$
|\int_{B(0,1)}f\phi\, dx|\leq c\|\phi \|_{\homog_0^{s,2}(B(0,1))}\sum_{1\leq |\alpha |\leq n+1}\| D^\alpha f\|_{L^\infty (\overline{B(0,1)})}.
$$
\end{lemma}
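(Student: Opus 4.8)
The plan is to reduce the claimed estimate to the $\homog^{s,2}$–$\homog^{-s,2}$ duality together with a crude frequency splitting for a compactly supported $C^{n+1}$ replacement of $f$, and the whole argument is transparent on the Fourier side. Writing $-s\in(n/2,n/2+1)$, the weight $|\xi|^{-2s}$ in $\|\cdot\|_{\homog^{-s,2}(\real^n)}$ obeys $-2s\in(n,n+2)$, so it is bounded near the origin and grows no faster than $|\xi|^{2(n+1)}$ at infinity; on the other side the vanishing mean of $\phi$ forces $\widehat\phi(\xi)=O(|\xi|)$ near $0$, which together with $s>-n/2-1$ is exactly what keeps $\|\phi\|_{\homog^{s,2}(\real^n)}$ finite in spite of the strong singularity $|\xi|^{2s}$ of the weight. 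This explains the role of both endpoints of the interval for $s$.

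First I would exploit the hypothesis $\int_{\real^n}\phi\,dx=0$ to subtract a constant, so that the right-hand side can avoid $\|f\|_{L^\infty}$ altogether. Since the mean of $\phi$ vanishes,
$$
\int_{B(0,1)}f\phi\,dx=\int_{B(0,1)}(f-f(0))\phi\,dx .
$$
Put $F:=f-f(0)$ on $\overline{B(0,1)}$; by the fundamental theorem of calculus on the convex ball one has $\|F\|_{L^\infty(\overline{B(0,1)})}\le c\|\nabla f\|_{L^\infty(\overline{B(0,1)})}$, while $D^\alpha F=D^\alpha f$ for $1\le|\alpha|\le n+1$. Next extend $F$ to $\widetilde F\in C^{n+1}(\real^n)$ with $\|D^\alpha\widetilde F\|_{L^\infty(\real^n)}\le c\|D^\alpha F\|_{L^\infty(\overline{B(0,1)})}$ for $0\le|\alpha|\le n+1$ (a standard extension for a ball), fix a cut-off $\chi\in\smooth(B(0,2))$ with $\chi\equiv 1$ on $B(0,1)$, and set $g:=\chi\widetilde F$, a compactly supported function of class $C^{n+1}$. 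Because $\supp\phi\subset B(0,1)$ and $g=F$ there, we have $\int_{B(0,1)}(f-f(0))\phi\,dx=\int_{\real^n}g\phi\,dx$. Moreover, by the Leibniz rule and the extension bounds (the order-zero contribution $\|\widetilde F\|_{L^\infty}$ being absorbed into $\|\nabla f\|_{L^\infty}$ through the bound on $F$), every derivative of $g$ satisfies, writing $M:=\sum_{1\le|\beta|\le n+1}\|D^\beta f\|_{L^\infty(\overline{B(0,1)})}$,
$$
\|D^\alpha g\|_{L^\infty(\real^n)}\le cM,\qquad 0\le|\alpha|\le n+1 ,
$$
and, since $g$ has fixed compact support, the same quantity bounds $\|D^\alpha g\|_{L^2(\real^n)}$.

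The heart of the matter is then two short estimates. On one hand, by Plancherel (for the real functions $g$ and $\phi$) and Cauchy–Schwarz,
$$
\Big|\int_{\real^n}g\phi\,dx\Big|=\Big|\int_{\real^n}\widehat g\,\overline{\widehat\phi}\,d\xi\Big|\le\Big(\int|\widehat g|^2|\xi|^{-2s}\,d\xi\Big)^{1/2}\Big(\int|\widehat\phi|^2|\xi|^{2s}\,d\xi\Big)^{1/2}=\|g\|_{\homog^{-s,2}(\real^n)}\,\|\phi\|_{\homog^{s,2}(\real^n)} ,
$$
which is just the duality \refeq{eq:1.0}, and $\|\phi\|_{\homog^{s,2}(\real^n)}=\|\phi\|_{\homog_0^{s,2}(B(0,1))}$. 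On the other hand, splitting the frequency integral and using $|\xi|^{-2s}\le 1$ on $\{|\xi|\le 1\}$ (as $-2s>0$) and $|\xi|^{-2s}\le(1+|\xi|^2)^{n+1}$ on $\{|\xi|\ge 1\}$ (as $-2s\le 2n+2$), together with \refeq{eq1.3},
$$
\|g\|_{\homog^{-s,2}(\real^n)}^2\le\|g\|_{L^2}^2+\int_{\real^n}|\widehat g|^2(1+|\xi|^2)^{n+1}\,d\xi\le c\sum_{|\alpha|\le n+1}\|D^\alpha g\|_{L^2}^2\le cM^2 .
$$
Combining the two displays gives exactly the asserted inequality.

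The only genuinely non-routine point is the bookkeeping that keeps the right-hand side free of $\|f\|_{L^\infty}$: this is what forces the subtraction of $f(0)$, so that the order-zero term $\|g\|_{L^2}$ is controlled by $\|\nabla f\|_{L^\infty}$ and the low-frequency region becomes harmless. I expect the controlled $C^{n+1}$-extension $\widetilde F$ to be the most delicate ingredient to state cleanly, although it is entirely standard for a ball (reflection or a Seeley-type operator); everything else is Plancherel together with the elementary two-region split of the weight $|\xi|^{-2s}$, the admissibility of which is precisely guaranteed by the assumption $s\in(-n/2-1,-n/2)$.
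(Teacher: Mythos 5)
Your argument is correct and follows essentially the same route as the paper's proof: subtract a constant from $f$ (you use $f(0)$, the paper uses the mean over the ball) so that the zero-order term is controlled by $\|\nabla f\|_{L^\infty}$, extend to a controlled function on $\real^n$, and conclude by the $\homog^{s,2}$--$\homog^{-s,2}$ duality via the embedding $W^{n+1,2}\subset\homog^{-s,2}$. The only difference is that you make the paper's ``formal'' duality step explicit through Plancherel and the two-region split of the weight $|\xi|^{-2s}$, which is a welcome clarification; note only that the claimed derivative-by-derivative extension bound $\|D^\alpha\widetilde F\|_{L^\infty}\le c\|D^\alpha F\|_{L^\infty}$ is stronger than the standard extension theorem provides, but the aggregate bound $\|\widetilde F\|_{C^{n+1}}\le c\|F\|_{C^{n+1}}$ is all you actually use.
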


\begin{proof}
Observe that in the left hand side we may replace $f$ by $f-m,$ where
$m$ is the average of $f$ over the ball $B(0,1)$. Hence we may assume
that $\| f\|_{L^\infty (\overline{B(0,1)})}$ is dominated by $\|
Df\|_{L^\infty (\overline{B(0,1)})}$.  It follows that it is enough to
prove the stated estimate where one sums over all $|\alpha |\leq n+1$
in the right hand side. But it is easy to extend $f$ to an element
$\widetilde f \in W^{n+1,2}(\real^n)$ with norm less than constant
times $\sum_{|\alpha |\leq n+1}\| D^\alpha f\|_{L^\infty
(\overline{B(0,1)})}$.  The claim follows now by duality since formally
$W^{n+1,2}(B(0,1))\subset
\homog^{-s,2}(B(0,1))=\homog_0^{s,2}(B(0,1))'. $
\end{proof}

\begin{theorem}
Let $\{ X_s\}_{s\in\real^n}$ be an $n$-dimensional Levy FBM with Hurst
parameter $H\in (0,1).$ For any ball $B\subset\real^n$ let $E_B$ be
the $L^2$-space generated by the differences $\{
X_{s_1}-X_{s_2}\joille s_1,s_2\in B\} .$ Then, if $s_1\not=s_2$ the
subspaces $E_{B(s_1,\varepsilon )}$ and $E_{B(s_2,\varepsilon )}$ are
asymptotically independent as $\varepsilon\to 0.$ Moreover, there are
positive constants $c_1,c_2>0$ such that
$$
c_1\varepsilon^{2H-2}\leq \cos(\sphericalangle (E_{B(s_1,\varepsilon
)},E_{B(s_2,\varepsilon )}))\leq c_2\varepsilon^{2H-2}.
$$
\end{theorem}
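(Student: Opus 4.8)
The plan is to run the argument of Theorem~\ref{th:2.1} in $\real^n$, using Lemma~\ref{le:2.15} to turn the problem into a function-space estimate and then reading off the order of the cosine from the decay of the covariance kernel between two far-apart balls. By the $H$-self-similarity of the L\'evy FBM the dilation $u\mapsto u/\varepsilon$ preserves all angles and sends $B(s_i,\varepsilon)$ to the unit ball $B(s_i/\varepsilon,1)$; hence $\cos(\sphericalangle(E_{B(s_1,\varepsilon)},E_{B(s_2,\varepsilon)}))$ equals the cosine for two \emph{unit} balls whose centres are at distance $k:=|s_1-s_2|/\varepsilon$. It therefore suffices to prove $\cos(\sphericalangle(E_{B_1},E_{B_2}))\sim k^{2H-2}$ as $k\to\infty$ for $B_1=B(0,1)$ and $B_2=B(ke,1)$ ($e$ a fixed unit vector); substituting $k=|s_1-s_2|/\varepsilon$ then gives the asserted two-sided bound.

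By Lemma~\ref{le:2.15}, for mean-zero $\phi,\psi\in\smooth(\real^n)$ one has $\expec[(\int X\phi)(\int X\psi)]=c_{n,H}(\phi,\psi)_{\homog^{\sigma,2}(\real^n)}$ with $\sigma:=-n/2-H\in(-n/2-1,-n/2)$ and $c_{n,H}>0$. Thus $E_B$ is isometric to the closed mean-zero subspace $\{\phi\in\homog_0^{\sigma,2}(B):\int\phi=0\}$, and the cosine equals the supremum of $|(\phi_1,\phi_2)_{\homog^{\sigma,2}}|/(\|\phi_1\|\,\|\phi_2\|)$ over mean-zero test functions $\phi_i$ supported in $B_i$. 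Since the supports are disjoint and both means vanish, the terms $|u|^{2H}$ and $|v|^{2H}$ drop out and $(\phi_1,\phi_2)_{\homog^{\sigma,2}}=-\tfrac{1}{2c_{n,H}}\int\int|u-v|^{2H}\phi_1(u)\phi_2(v)\,du\,dv$, a kernel that is smooth of size $k^{2H}$ on $B_1\times B_2$. The point of choosing the exponent $\sigma$ in the range of Lemma~\ref{le:3.1} is that this lemma supplies exactly the duality needed to exploit the decay of the derivatives of this kernel.

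For the upper bound I would fix $\phi_1$, set $g(v):=\int_{B_1}|u-v|^{2H}\phi_1(u)\,du$, and apply Lemma~\ref{le:3.1} twice. First, in the variable $v$, $|\int g\phi_2|\le c\|\phi_2\|\sum_{1\le|\alpha|\le n+1}\|D_v^\alpha g\|_{L^\infty(B_2)}$. Differentiating under the integral sign and applying Lemma~\ref{le:3.1} a second time in $u$ to each $D_v^\alpha g(v)=\int(D_v^\alpha|u-v|^{2H})\phi_1(u)\,du$ bounds it by $c\|\phi_1\|\sum_{1\le|\beta|\le n+1}\|D_u^\beta D_v^\alpha|u-v|^{2H}\|_{L^\infty(B_1)}$. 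The mixed derivatives $D_u^\beta D_v^\alpha|u-v|^{2H}$ are $O(k^{2H-|\alpha|-|\beta|})$ with $|\alpha|,|\beta|\ge1$, hence $O(k^{2H-2})$, giving $\|D_v^\alpha g\|_{L^\infty(B_2)}\le ck^{2H-2}\|\phi_1\|$ and so $|(\phi_1,\phi_2)_{\homog^{\sigma,2}}|\le ck^{2H-2}\|\phi_1\|\,\|\phi_2\|$, i.e.\ $\cos\le c_2k^{2H-2}$. This iterated duality estimate replaces the Hilbert--Schmidt factorisation of Theorem~\ref{th:2.1}; the mean-zero hypotheses are precisely what allow Lemma~\ref{le:3.1} to annihilate the non-decaying zeroth-order contributions.

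For the matching lower bound I would produce fixed-shape mean-zero bumps $\phi_1,\phi_2$ with prescribed first moments $m_i:=\int u\,\phi_i\,du$, whose norms are independent of $k$ by translation invariance. Expanding $|u-v|^{2H}$ to second order about the two centres, the mean-zero conditions kill the constant and the purely-$u$ and purely-$v$ linear terms, leaving a leading contribution $\sim k^{2H-2}\,m_1^\top(\mathrm{Hess}\,|\cdot|^{2H})(ke)\,m_2$. I expect the main obstacle to lie here: one must rule out a conspiracy forcing this coefficient to vanish for all admissible $\phi_i$. The clean way out is that $\mathrm{Hess}\,|w|^{2H}$ at $w=ke$ is nondegenerate on the hyperplane $e^\perp$ and of size $k^{2H-2}$ there, so choosing $m_1,m_2$ transverse to the separation direction $e$ yields a genuinely nonzero leading term and hence $\cos\ge c_1k^{2H-2}$. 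In particular $c_1>0$ even when $H=\half$, which is exactly where dimension $n\ge2$ departs from the one-dimensional situation, where the corresponding leading coefficient $r_H$ vanishes at $H=\half$.
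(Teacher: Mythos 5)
Your proposal is correct, and for the reduction and the upper bound it coincides with the paper's argument: both rescale to two unit balls whose centres are at distance $k=|s_1-s_2|/\varepsilon$, identify $E_B$ with mean-zero test functions in $\homog^{-n/2-H,2}$ via Lemma~\ref{le:2.15}, and then apply Lemma~\ref{le:3.1} once in each variable so that only the mixed derivatives $D_u^\beta D_v^\alpha|u-v|^{2H}$ with $|\alpha|,|\beta|\ge1$ survive, giving the bound $O(k^{2H-2})$. Where you genuinely diverge is the lower bound. The paper disposes of it in one line by restricting the process to the line through $s_1$ and $s_2$, which is a one-dimensional FBM, and invoking Theorem~\ref{th:2.1}; you instead take fixed-shape mean-zero bumps with prescribed first moments $m_1,m_2$ and read off the leading term $-m_1^{\top}\bigl(\operatorname{Hess}|\cdot|^{2H}\bigr)(ke)\,m_2$ from a second-order Taylor expansion, choosing $m_1,m_2$ in $e^{\perp}$, where the Hessian is $2Hk^{2H-2}$ times the identity. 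This costs more work but buys something real: the paper's restriction argument degenerates at $H=\tfrac12$, since $r_{1/2}=0$ and the restricted one-dimensional process is Brownian motion with independent increments, so as written it does not produce $c_1>0$ in that case, whereas your transverse-moment computation does, precisely because the transverse eigenvalue of the Hessian carries the factor $2H$ rather than $2H(2H-1)$. The only points you should make explicit are that the third-order remainder in the expansion is $O(k^{2H-3})$ (bounded third moments against third derivatives of the kernel), hence does not disturb the leading term for large $k$, and that the norms of the translated bumps are indeed $k$-independent by translation invariance; with those in place the argument is complete.
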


\begin{proof}
The proof is analogous to the proof of Theorem \ref{th:2.1}.  First
of all, the lower bound is an immediate consequence of the
one-dimensional case since the restriction of the process to a line
through the points $s_1,s_2$ is a one-dimensional FBM. In order to
deduce the upper bound we observe that according to Lemma
\ref{le:2.15} and an easy analogue of Lemma \ref{le:2.1} the cosine of
the angle between the spaces is given by the quantity
$$
A:=-\frac{1}{2}\sup_{\phi,\psi}\int_{\real^n}\int_{\real^n}
|u-s|^{2H}\phi (u)\overline{\psi (s)}\, duds ,
$$ 
where the supremum is taken over all functions  $\phi\in \smooth (B(0,1))\cap W_0^{-n/2-H,2}(B(0,1))$ and
$\psi\in \smooth(B(ke_1,1))\cap W_0^{-n/2-H,2}(B(ke_1,1))$,  with unit norm and zero mean. Here $k=|s_1-s_2|/\varepsilon >0$. Observe that
we used the obvious scaling and rotation invariance of the Levy
FBM. By a twofold application of Lemma \ref{le:3.1} it follows that
$$
A\lesssim \sup_{u\in B(0,1), s\in B(ke_1,1)}\sum_{1\leq |\alpha |\leq
n+1, 1\leq |\beta |\leq n+1}\big| D^\alpha_uD^\beta_s \big(
|u-s|^{2H}\big)\big| \sim k^{2H-2}.
$$
\end{proof}

Our results raise several interesting open problems related to local
independence of stochastic processes. We expect that the methods of
the present paper are pretty much restricted to dealing with the
FBM, although they may help in obtaining insights and conjectures
regarding the following questions. 

\medskip

{\bf Q.1}\quad Let $X=\{ X_t\}_{t\in\real}$ be a Gaussian process with
continuous paths and stationary increments. Find necessary and
sufficient conditions for the local independence property, e.g.\ in
terms of the spectral measure of $X$, or in terms of the variance
function $v(t)=\expec X_t^2$.

\medskip

With regards to Question 1, we can note a couple of obvious obstacles
for local independence. First, if the process is $L^2$-differentiable,
the value of the derivative process belongs to the infinitesimal
sigma-algebra around a point (see \cite{Tutubalin}), and apart from
trivial cases this will destroy local independence. Second, periodic
processes, like the periodic Brownian bridge defined by the variance
function
$$
v(t)=\expec X_t^2=(t \mod 1)(1-(t \mod 1)),
$$
clearly do not satisfy local independence for all times. Periodic
components are reflected as atoms of the spectral measure. But are
non-smoothness and continuity of spectrum already sufficient for local
independence? 

One can also ask for a local characterization:

\medskip

{\bf Q.2}\quad Let $(X_t)$ again be a Gaussian process with stationary
increments. Give conditions on the variance function $v(t)$ in a
neighbourhood of the origin and in a neighbourhood of the point
$|t_1-t_2|$ that would guarantee local independence with respect to
points $t_1,t_2$.

\medskip

{\bf Q.3}\quad Superposing Brownian bridges with different periods,
one can probably build examples of non-smooth processes where local
independence breaks over any rational distance. But is it possible to
construct a continuous but non-differentiable Gaussian process with
stationary increments that does not possess local independence over
any distance?

\medskip

{\bf Q.4} So far we have only focused on Gaussian processes. Our
information-based definition of local independence is, however,
meaningful for any kind of stochastic process. It is then interesting
to ask about the local independence of various dependent
processes. For example, do fractional L\'evy processes have this
property?

\bibliographystyle{amsalpha}

\end{document}